\newtheorem{theorem}{Theorem}[section]
\newtheorem{corollary}[theorem]{Corollary}
\newtheorem{lemma}[theorem]{Lemma}
\newtheorem{proposition}[theorem]{Proposition}
\newtheorem{question}[theorem]{Question}
\theoremstyle{definition}
\theoremstyle{remark}
\numberwithin{equation}{section}
\newcommand{\eone}{Proposition \ref{NazInf} }
\newcommand{\bc}{\mathbb{C}}
\newcommand{\C}{\mathbb{C}}
\newcommand{\N}{\mathbb{N}}
\newcommand{\br}{\mathbb{R}}
\newcommand{\R}{\mathbb{R}}
\newcommand{\Z}{\mathbb{Z}}
\newcommand{\ve}{\varepsilon}
\newcommand{\bq}{\begin{equation}}
\newcommand{\eq}{\end{equation}}
\begin{document}

\title[Linear independence of time-frequency translates of functions with decay]
{Linear independence of time-frequency translates of functions with faster than exponential decay}

\author{Marcin Bownik}

\address{Department of Mathematics, University of Oregon, Eugene,
OR 97403--1222, USA}

\email{mbownik@uoregon.edu}

\author{Darrin Speegle}

\address{Department of Mathematics and Computer Science, Saint Louis 
University, 221 N. Grand Blvd.,
St. Louis, MO 63103, USA}

\email{speegled@slu.edu}

\keywords{Gabor system, exponential decay, HRT conjecture, linear independence, time-frequency translates, analytic zero divisor conjecture}


\subjclass[2000]{Primary: 26A99, 42C40, Secondary: 11K70}
\date{\today}

\begin{abstract} We establish the linear independence of time-frequency translates for functions $f$ having one sided decay $\lim_{x\to \infty} |f(x)| e^{cx \log x} = 0$ for all $c>0$. We also prove such results for functions with 
faster than exponential decay, i.e., $\lim_{x\to \infty} |f(x)| e^{cx} = 0$ for all $c>0$, under some additional restrictions.
\end{abstract}
\maketitle



\section{Introduction} 

The Heil-Ramanathan-Topiwala (HRT) conjecture \cite{HRT} states that time-frequency translates of a non-zero square integrable function $f$ on $\R$ are linearly independent. There have been a few partial results on the HRT conjecture.  Most results have focused on finding conditions on $\Lambda\subset \R^2$ which guarantee that time-frequency translates 
\[
{\mathcal {G}}(f,\Lambda) := \{M_aT_b f = e^{2\pi i a\cdot}f(\cdot - b): (a,b) \in \Lambda\}
\]
along $\Lambda$ are linearly independent. The most general type of partial result was obtained by Linnell \cite{Lin}, who showed that if $\Lambda$ is a lattice in $\R^2$ and $0 \ne f\in L^2(\R)$, then
${\mathcal {G}}(f,\Lambda)$ is linearly independent. The results in \cite{Lin} hold in the more general setting of $\R^d$, and they use the machinery of von Neumann algebras. Alternative proofs of Linnell's result were later given by the authors \cite{BS} and Demeter and Gautam \cite{DG}. Other interesting results have been obtained by Demeter and Zaharescu \cite{D, DZ} in the case that $\Lambda$ consists of four points which are contained in two parallel lines.  The HRT conjecture was also studied in the setting of locally compact abelian groups \cite{Ku} and for finite groups \cite{LPW}. Furthermore, the HRT conjecture is related to the analytic zero divisor conjecture of Linnell restricted to the Heisenberg group, see \cite{L2, Ros2} for details.

In this paper, we shift focus to finding classes ${\mathcal {F}}$ of measurable functions such that 
\[
\mathcal G(f,\R^2)=\{M_aT_b f:a,b\in \br\}
\]
is linearly independent for all $0 \ne f\in {\mathcal {F}}$.  A specific conjecture of this type was made in \cite{D} (following a question asked in \cite{Heil}), where it was conjectured that if $f$ is in the Schwartz class, then it has linearly independent time-frequency translates.  The main purpose of this paper is to provide a sufficient condition on the decay of $f$ which guarantees the linear independence of its time-frequency translates.  Our main result yields the linear independence of time-frequency translates of $f$ with sufficiently fast one-sided decay. 

\begin{theorem}\label{decay} Suppose $f: \R \to \C$ is a non-zero Lebesgue measurable function such that for all $c>0$, 
\begin{equation}\label{fel}
\lim_{x\to \infty} |f(x)| e^{cx \log x} = 0.
\end{equation}
Then, the set $\mathcal G(f,\R^2)$ of time-frequency translates of $f$ is linearly independent.
\end{theorem}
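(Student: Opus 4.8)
The plan is to unwind the vanishing relation into a one-sided first-order recursion and then play that recursion off against the decay of $f$. Suppose, for contradiction, that some nontrivial finite linear combination $\sum_k c_k M_{a_k}T_{b_k}f$ is zero a.e. Grouping the translates by their common time-shift and discarding those shifts whose associated trigonometric polynomial vanishes identically, we may rewrite this as $\sum_{j=1}^n p_j(x)\,f(x-\gamma_j)=0$ a.e., with $\gamma_1<\dots<\gamma_n$ and each $p_j$ a nonzero trigonometric polynomial. If $n=1$ then $p_1 f=0$ a.e., and since a nonzero trigonometric polynomial has a Lebesgue-null zero set this already gives $f=0$; so assume $n\ge 2$. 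Replacing $x$ by $x+\gamma_n$ turns the relation into $\tilde p_n(x)f(x)=-\sum_{j=1}^{n-1}\tilde p_j(x)f(x+\delta_j)$, where $\tilde p_j(x)=p_j(x+\gamma_n)$ are again nonzero trigonometric polynomials, $\delta_j:=\gamma_n-\gamma_j>0$, $\delta_{\min}:=\min_{j\le n-1}\delta_j=\gamma_n-\gamma_{n-1}>0$, and $\delta_{\max}:=\max_{j\le n-1}\delta_j=\gamma_n-\gamma_1$. For $x$ outside a Lebesgue-null set---namely where $\tilde p_n(x+\sigma)\ne0$ and the relation holds at $x+\sigma$ for every finite sum $\sigma$ of the $\delta_j$---we obtain the recursion
\[
f(x)=\sum_{j=1}^{n-1}q_j(x)\,f(x+\delta_j),\qquad q_j:=-\tilde p_j/\tilde p_n .
\]

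Iterating this $m$ times writes $f(x)$ as a sum over branches $\vec j\in\{1,\dots,n-1\}^m$,
\[
f(x)=\sum_{\vec j}\Big(\prod_{\ell=1}^m q_{j_\ell}\big(y^{\vec j}_\ell\big)\Big)\,f\big(x+S_{\vec j}\big),
\qquad y^{\vec j}_\ell=x+\delta_{j_1}+\dots+\delta_{j_{\ell-1}},\quad S_{\vec j}=\sum_{\ell=1}^m\delta_{j_\ell}\ge m\delta_{\min}.
\]
There are $(n-1)^m$ branches; hypothesis \eqref{fel} bounds every factor $|f(x+S_{\vec j})|$ by $e^{-c(x+m\delta_{\min})\log(x+m\delta_{\min})}$ once $m$ is large, for any prescribed $c>0$; and $|q_j(y)|\le C/|\tilde p_n(y)|$ with $C=\max_j\|\tilde p_j\|_\infty$. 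Everything thus reduces to bounding $\prod_{\ell=1}^m 1/|\tilde p_n(y^{\vec j}_\ell)|$ uniformly over branches. The points $y^{\vec j}_\ell$ that can arise from branches of length $m$ all lie in $x+\Sigma_{\le m}$, where $\Sigma_{\le m}=\{\delta_{i_1}+\dots+\delta_{i_r}:r\le m,\ i_\ell\in\{1,\dots,n-1\}\}\subset[0,m\delta_{\max}]$ has only $O(m^{n-1})$ elements (count multisets, $n$ fixed). Using the sublevel-set estimate $|\{y\in I:|\tilde p_n(y)|<\eta\}|\le C'(|I|+1)\,\eta^{1/\nu}$ valid for trigonometric polynomials ($\nu$ a positive integer depending only on $\tilde p_n$), a Borel--Cantelli argument with thresholds $\eta_m=m^{-\nu(n+1)}$---so that $\sum_m|\Sigma_{\le m}|\,\eta_m^{1/\nu}<\infty$---shows that for a.e.\ $x$ there is $M(x)$ with $|\tilde p_n(y)|\ge m^{-\nu(n+1)}$ for all $y\in x+\Sigma_{\le m}$ and all $m\ge M(x)$. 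Hence, for such $x$, $\prod_{\ell=1}^m 1/|\tilde p_n(y^{\vec j}_\ell)|\le m^{\nu(n+1)m}$, uniformly in $\vec j$.

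Combining the three bounds, for a.e.\ $x$ and all large $m$,
\[
|f(x)|\le (n-1)^m\,C^m\,m^{\nu(n+1)m}\,e^{-c(x+m\delta_{\min})\log(x+m\delta_{\min})}
\le \exp\!\Big(\big(\nu(n+1)-\tfrac{c}{2}\delta_{\min}\big)\,m\log m+O(m)\Big),
\]
where the last step uses $(x+m\delta_{\min})\log(x+m\delta_{\min})\ge\tfrac12\delta_{\min}\,m\log m$ for large $m$. Since $\nu$, $n$, $\delta_{\min}$ depend only on the fixed configuration whereas \eqref{fel} is available for every $c>0$, we may take $c$ so large that the coefficient of $m\log m$ is negative; letting $m\to\infty$ forces $f(x)=0$. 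Therefore $f=0$ a.e., contradicting $f\ne0$, and $\mathcal G(f,\R^2)$ is linearly independent.

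I expect the crux to be the uniform control of $\prod_{\ell}1/|\tilde p_n(y^{\vec j}_\ell)|$ when $n\ge3$ and $\tilde p_n$ has zeros: then $1/|\tilde p_n|$ is unbounded, the recursion genuinely branches, and a priori these products could grow far faster than exponentially in $m$. The Borel--Cantelli step tames them, but only to $e^{O(m\log m)}$ and not to $e^{O(m)}$, because the number $|\Sigma_{\le m}|$ of reachable points is already polynomial in $m$. This is exactly why the theorem asks for decay of the form $|f(x)|e^{cx\log x}\to0$ for all $c>0$ rather than merely faster-than-exponential decay: along a progression of step $\asymp1$ the former supplies a factor $e^{-cm\log m}$, which---with $c$ chosen large relative to the configuration---is just enough to defeat the transfer product. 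When $\tilde p_n$ is zero-free, or when $n\le2$, the product is only $e^{O(m)}$ and faster-than-exponential decay already suffices, which presumably accounts for the "additional restrictions" in the companion statement.
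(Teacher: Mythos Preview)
Your argument is correct. The core mechanism matches the paper's: iterate the relation to depth $m$, observe that only $O(m^{n-1})$ distinct shift points $x+\sigma$ can arise, use the Tur\'an--Nazarov sublevel estimate $|\{|\tilde p_n|<\eta\}|\lesssim \eta^{1/\nu}$ to show the transfer products grow at worst like $e^{O(m\log m)}$, and then defeat this with the decay hypothesis~\eqref{fel}.

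Where you diverge from the paper is in the packaging. The paper first isolates a general determinant criterion (Theorem~\ref{tumb}): linear independence follows if, for every trigonometric polynomial $u$, constant $M$, and finite $B\subset\R_+$, the set $\{x:|u(x)f(x)|>M\sum_i|f(x+b_i)|\}$ has positive measure. It then assumes this fails, iterates the resulting one-sided inequality, and invokes a separate lemma (Lemma~\ref{product}) which, via a Chebyshev argument on $\log_-|\prod u(x+\sigma)|$, produces for each $k$ a subset $E_k\subset I$ of measure $>|I|/2$ on which the sum of reciprocal products is $\le e^{\eta k\log k}$; intersecting $E_k$ with a fixed set where $|f|>\varepsilon$ yields a lower bound $\sup_{y\ge a+kb_1}|f(y)|\ge \varepsilon M^{-k}e^{-\eta k\log k}$, contradicting~\eqref{fel}. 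Your route is more direct: you iterate the actual linear relation, replace the Chebyshev/large-set step by a Borel--Cantelli argument giving $|\tilde p_n(x+\sigma)|\ge m^{-\nu(n+1)}$ for all $\sigma\in\Sigma_{\le m}$ and a.e.\ $x$ once $m\ge M(x)$, and conclude $f(x)=0$ pointwise a.e. What the paper's modular structure buys is reusability---Theorem~\ref{tumb} and Lemma~\ref{product} are applied separately to the $\R\times b\Z$ and quasi-monotone cases---whereas your argument is self-contained and avoids the determinant machinery entirely. Two small points worth making explicit in a write-up: the Borel--Cantelli should be run on a fixed bounded interval (and then a countable exhaustion), and the sublevel constant is uniform because all the intervals $I+\sigma$ have the same length.
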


There are two main ingredients used in proving Theorem \ref{decay}. The first is the Tur\'an-Nazarov inequality \cite{N} which provides a precise estimate on the size of a set where a trigonometric polynomial is small. In Section \ref{S2} we illustrate how this result is used to deduce the linear independence of $\mathcal G(f,\R^2)$ when $f$ has faster than Gaussian decay. The second key ingredient is a technical sufficient condition for the linear independence of time-frequency translates of $f$ which is established using a determinant argument in Theorem \ref{tumb}. This theorem becomes a foundation on which our subsequent results for functions $f$ with faster than exponential decay are shown. In Theorems \ref{grz} and \ref{qm} we show the linear independence of $\mathcal G(f,\Lambda)$ when $\Lambda=\R \times b\Z$ or $f$ satisfies some weak monotonicity condition, respectively. Finally, we derive a  delicate lower bound estimate on products of trigonometric polynomials which is then used to deduce Theorem \ref{decay}.

\section{Gaussian Decay Condition}\label{S2}

In this section we prove that the time-frequency translates of a function with faster than Gaussian decay are linearly independent.  That is, we will show that if $f \ne 0$ satisfies 
\begin{equation}\label{fga}
\lim_{x\to \infty} f(x) e^{cx^2} = 0\qquad\text{for all }c>0,
\end{equation}
then $f$ has linearly independent time-frequency translates. While this result is superseded by Theorem \ref{decay}, our goal here is to give the simplest proof (known to us) of the linear independence of $\mathcal G(f,\R^2)$ provided we are willing to assume sufficient decay on $f$. Furthermore, the proof of Theorem \ref{gooddecay} illustrates the essential role played by the Tur\'an-Nazarov inequality.

Let us introduce some notation and a reformulation of the HRT conjecture. Suppose that there exists a finite set $\Lambda \subset \R^2$ of parameters for time-frequency translates and non-zero complex coefficients
$\{c_{(a,b)}: (a,b) \in \Lambda\}$ such that 
\[
\sum_{(a,b) \in \Lambda} c_{(a,b)} M_aT_b f = 0.
\]
Let $\Gamma = \{b\in \R: \exists a\in \R {\rm \ such \  that \ } (a,b) \in \Lambda\}$.  For each $b\in \Gamma$, we define a non-zero trigonometric polynomial (for us, trigonometric polynomials need not be periodic) $u_b(x) = \sum_{a\in \Gamma_b} c_{(a,b)} e^{2\pi i a x}$, where $\Gamma_b = \{a\in \R: (a,b) \in \Lambda\}$. Since $\Gamma$ is finite we can list its elements in increasing order $b_1<\ldots<b_n$.  By letting $u_i(x)=u_{b_i}(x)$, the linear dependence condition takes the form
\begin{equation}\label{tumb2}
\sum_{b\in \Gamma} u_i(x)T_bf(x)= \sum_{i=1}^n u_{i}(x) f(x-b_i)=0 \qquad\text{for a.e. }x \in\R.
\end{equation}
With these comments, it is clear that showing linear independence of time-frequency translates of $f$ is equivalent to showing that whenever $u_i$ are non-zero trigonometric polynomials and $b_1 < \cdots < b_n$, we have 
\[
\sum_{i=1}^n u_i(x) f(x - b_i) \not= 0.
\]

In view of the recasting of the problem in the above paragraph, it is apparent that the zeros of trigonometric polynomials may play a key role in the study of the HRT conjecture. 
In our proof of Theorem \ref{gooddecay}, given a trigonometric polynomial $u$, we need an estimate on the most that 
$
\inf_{x\in E} |u(x)|
$
can decrease when $E$ is replaced by a subset of measure half of its size.  Intuitively, the worst case scenario is to center the set $E$ around zeros of the trigonometric polynomial, at least when the measure of $E$ is small.  Since the number and multiplicity of zeros of a trigonometric polynomial are uniformly bounded over all intervals of length $K$ (Lander's lemma, c.f. \cite[Lemma 1.3]{N}), it is believable that we should be able to obtain a bound on how much the trigonometric polynomial can shrink.  The following version of the Tur\'an lemma, which is due to Nazarov \cite[Theorem I]{N}, shows that this is indeed the case.

\begin{theorem}[Tur\'an-Nazarov inequality] \label{NazTur}
Let
$u(x) = \sum_{j=1}^m c_j e^{a_j x}$, 
where $a_j, c_j\in {\mathbb C}$, be an exponential polynomial of order
$m$. Let $I\subset\br$ be an interval, and
$E$ a measurable subset of $I$ of positive measure.
Then,
\[
\sup_{x\in I}|u(x)|\le e^{|I|\max|\Re a_j|}
 \biggl\{\frac{A |I|}{|E|}\biggr\}^{m-1}
 \sup_{x\in E}|u(x)|.
\]
Here, $A$ denotes an absolute constant, and $|E|$ is the Lebesgue measure of $E$.
\end{theorem}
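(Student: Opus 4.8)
The inequality is Theorem~I of \cite{N}, so the plan is to recall the ideas behind Nazarov's proof. The first step is to normalize the interval: the affine substitution $x\mapsto x_0+|I|x$, where $I=[x_0,x_0+|I|]$, leaves the number of exponential monomials unchanged, rescales the exponents to $|I|a_j$, carries $E$ to a measurable subset of $[0,1]$ of the same relative measure, and converts $|I|\max_j|\Re a_j|$ into $\max_j|\Re a_j|$. So it is enough to prove, for $u(x)=\sum_{j=1}^m c_je^{a_jx}$ and $E\subset[0,1]$ of positive measure,
\[
\sup_{x\in[0,1]}|u(x)|\ \le\ e^{\max_j|\Re a_j|}\Bigl(\frac{A}{|E|}\Bigr)^{m-1}\sup_{x\in E}|u(x)| .
\]
Two features make this delicate: the multiplicative constant must be absolute --- in particular independent of $m$ and of how widely the exponents are spread, the only permitted dependence being the factor $e^{\max_j|\Re a_j|}$ --- and $E$ is allowed to be an arbitrary positive-measure set rather than an interval.

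The route I would take is to transfer the estimate to ordinary polynomials. Suppose first that $\Re a_j=0$ for every $j$ and that the numbers $\frac{1}{2\pi}\Im a_j$ are rational with a common denominator $N$; then the substitution $z=e^{2\pi ix/N}$ identifies $u|_{[0,1]}$ with the restriction to an arc $\Gamma$ of the unit circle of an algebraic polynomial $P(z)=\sum_{j=1}^m c_jz^{k_j}$ --- one with only $m$ nonzero coefficients --- and carries $E$ to a positive-measure subset of $\Gamma$ of relative measure $|E|$. Approximating the $\Im a_j$ by such rationals and letting $N\to\infty$ recovers arbitrary real frequencies, since uniform convergence on $[0,1]$ makes both sides of the desired inequality pass to the limit; and the non-oscillatory factors $e^{(\Re a_j)x}$, which on $[0,1]$ lie between $e^{-\max_j|\Re a_j|}$ and $e^{\max_j|\Re a_j|}$, are exactly what the constant $e^{\max_j|\Re a_j|}$ in the statement absorbs. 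So everything comes down to a \emph{sparse} Remez-type inequality on a circular arc $\Gamma$: a polynomial $P$ with at most $m$ nonzero monomials satisfies
\[
\sup_{\Gamma}|P|\ \le\ \Bigl(\frac{A\,|\Gamma|}{|E|}\Bigr)^{m-1}\sup_{E}|P|,\qquad E\subset\Gamma,\ \ |E|>0.
\]
Two classical ingredients carry this. For the dependence on $E$ one argues as in the Remez inequality itself: among subsets of $\Gamma$ of prescribed measure the one minimizing the ratio $\sup_\Gamma|P|/\sup_E|P|$ is a subarc, so it suffices to treat $E$ a subarc. For the exponent, the ordinary Remez inequality gives only the power $\deg P$, which is worthless after $N\to\infty$; the improvement to $m-1$ --- governed by the number of terms rather than the degree --- is Tur\'an's phenomenon, which I would establish by Tur\'an's power-sum method, expressing the large values of $P$ through its values on a short subarc of $E$ and controlling the coefficients of the resulting linear identities.

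I expect the exponent $m-1$ to be the main obstacle, precisely because the two most natural strategies miss it. Transferring to polynomials and quoting Remez produces the power $\deg P$, which blows up as the rational approximations of the frequencies are refined; and writing $u(x_0)$ for $x_0\in[0,1]$ as a linear combination of $m$ values of $u$ on $E$ through a generalized Vandermonde matrix produces a power like $\binom{m}{2}$, coming from the size of that determinant. Obtaining the clean exponent $m-1$, uniform in the frequencies, while simultaneously permitting $E$ to be an arbitrary positive-measure set and the exponents $a_j$ to be complex, is the delicate point, and is exactly what Nazarov's argument delivers.
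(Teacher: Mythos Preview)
The paper does not prove this theorem at all; it simply quotes it as Theorem~I of \cite{N} and uses it as a black box (in Proposition~\ref{NazInf} and in Lemmas~\ref{prod} and \ref{product}). So there is no ``paper's own proof'' for you to be compared against.

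As for the sketch itself, it is a plausible high-level outline, but two steps are not yet solid. First, the treatment of the real parts is too glib: you cannot ``absorb'' the factors $e^{(\Re a_j)x}$ into a single scalar $e^{\max_j|\Re a_j|}$ simply because each term carries a \emph{different} such factor, so the sum is not a scalar multiple of a purely oscillatory exponential polynomial; a genuine reduction to the case $\Re a_j=0$ requires an additional argument. Second, the route via rational approximation of the frequencies and a sparse Remez inequality for algebraic polynomials on a circular arc is interesting, but it is not the argument in \cite{N}, and you have not actually supplied the needed sparse Remez estimate with an absolute constant independent of the degree---that is precisely the hard part, as you yourself note. Nazarov's proof proceeds more directly, through a zero-counting lemma for exponential polynomials (what the present paper cites as Lander's lemma) combined with an interpolation/Kolmogorov-type estimate, and it is that machinery that delivers the exponent $m-1$ uniformly in the exponents. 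Your identification of the ``main obstacle'' is accurate, but the outline as written does not yet overcome it.
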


As a consequence of Theorem \ref{NazTur} we have the following result about trigonometric polynomials.

\begin{proposition}\label{NazInf}  Let $u(x) = \sum_{j=1}^m c_j e^{2\pi i a_j x}$ be a non-zero trigonometric polynomial, where $a_j\in \br$, $c_j \in \bc$.  Let $K > 0$. Then there exists a constant $C>0$, depending only on $u$ and $K$, such that following are true:
\begin{enumerate}
\item
for all $a\in \br$,
\begin{equation}\label{NazInf1}
\sup_{x\in [a, a + K]} |u(x)| \ge C,
\end{equation}
\item
for any set $E\subset \br$ of positive measure with diameter less than $K$, there exists an $F\subset E$ with $|F| = |E|/2$ and such that 
\begin{equation}\label{NazInf2}
|u(x)| \ge C |E|^{m-1}
\qquad\text{ for all } x\in F.
\end{equation}
\end{enumerate}
\end{proposition}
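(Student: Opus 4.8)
The plan is to derive both statements directly from the Tur\'an--Nazarov inequality (Theorem \ref{NazTur}) applied to $u$, viewed as an exponential polynomial of order $m$ with purely imaginary exponents $a_j' = 2\pi i a_j$, so that $\max|\Re a_j'| = 0$ and the factor $e^{|I|\max|\Re a_j'|}$ equals $1$.

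For part (1), cover $\R$ by the intervals $I_k = [kK, (k+1)K]$ and observe that a trigonometric polynomial with frequencies $a_j$ is, up to the modulation $e^{2\pi i a_1 x}$ (which has modulus one), a genuinely $2\pi i(a_j-a_1)$-exponential polynomial; more to the point, its behavior on any interval of length $K$ is governed by the same constant because trigonometric polynomials are ``almost periodic'' in a quantitative sense. First I would note it suffices to bound $\sup_{x\in[a,a+K]}|u(x)|$ from below by a positive constant independent of $a$. Apply Theorem \ref{NazTur} with $I=[a,a+K]$ and $E=I$: this gives no information. Instead, the right move is to fix once and for all a single interval, say $I_0 = [-K,K]$ of length $2K$, on which $M_0 := \sup_{I_0}|u| > 0$ since $u\not\equiv 0$; then for an arbitrary $a$ apply Theorem \ref{NazTur} on the interval $I = [a-K, a+K] \supset [a,a+K]$ with $E$ a suitable subinterval — but this still requires knowing $u$ is not small on $E$. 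The cleanest route is the following: by almost periodicity of trigonometric polynomials, for every $\ve>0$ the set of $\ve$-almost periods is relatively dense, so there is $L>0$ such that every interval of length $L$ contains a $\tau$ with $\|T_\tau u - u\|_\infty < M_0/2$; hence $\sup_{[a,a+K]}|u| \ge \sup_{[a,a+K]}|T_\tau u| - M_0/2$ for an appropriate translate bringing a near-maximum point of $|u|$ on $I_0$ into $[a,a+K]$ — provided $K \ge L$. For $K < L$ one then pays an extra Tur\'an--Nazarov factor $(AL/K)^{m-1}$ to pass from length-$L$ intervals down to length-$K$ intervals. This produces the constant $C$ depending only on $u$ and $K$.

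For part (2), let $E\subset\R$ have positive measure and diameter less than $K$, so $E$ is contained in an interval $I$ with $|I| = K$ (after enlarging slightly; more precisely $E \subset I$ for some interval of length at most $K$, and we may take $|I| = K$ by further enlargement). Apply Theorem \ref{NazTur} with this $I$ and with the set $E$: since $\max|\Re a_j'| = 0$ the exponential prefactor is $1$, and we obtain
\[
\sup_{x\in I}|u(x)| \le \Bigl(\frac{AK}{|E|}\Bigr)^{m-1}\sup_{x\in E}|u(x)|.
\]
Now define $F$ to be the half of $E$ on which $|u|$ is largest — concretely, let $t$ be the median value of $|u|$ on $E$ and set $F = \{x\in E : |u(x)| \ge t\}$ (choosing $F$ measurably so that $|F| = |E|/2$, which is possible by continuity of $s\mapsto |\{x\in E: |u(x)|>s\}|$). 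Then $\inf_{x\in F}|u(x)| = t$, and since $\{x\in E: |u(x)| < t\}$ has measure $|E|/2 > 0$ it is nonempty, so in particular we cannot directly say $\sup_E|u| $ is controlled by $t$; instead apply Theorem \ref{NazTur} a second time with the set $F$ in place of $E$: because $F\subset I$ and $|F| = |E|/2$,
\[
\sup_{x\in I}|u(x)| \le \Bigl(\frac{2AK}{|E|}\Bigr)^{m-1}\sup_{x\in F}|u(x)|.
\]
This does not yet give a lower bound on $\inf_F|u|$. The actual argument must instead combine part (1) with Tur\'an--Nazarov on $F$ directly: $\sup_{x\in I}|u| \ge \sup_{x\in [a,a+K]}|u| \ge C_1$ by part (1) applied with the constant for the interval length $K$, hence $\sup_{x\in F}|u| \ge C_1 (|E|/(2AK))^{m-1} =: C_2|E|^{m-1}$. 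To upgrade $\sup_F$ to $\inf_F$, re-choose $F$ as the top half: replacing $E$ by $F$ and running Tur\'an--Nazarov once more on $I$ with the top-quarter set $F' \subset F$, $|F'| = |E|/4$, we get $\sup_{F'}|u| \ge C_1(|E|/(4AK))^{m-1}$, and by construction $\inf_{F}|u| \ge \sup_{F'}|u|$ is false in general — so the correct bookkeeping is: let $F$ be the set where $|u|$ exceeds its median on $E$; then $\inf_F |u| = \operatorname{median}_E|u|$, and Tur\'an--Nazarov applied on $I$ with the complementary bottom half $E\setminus F$ gives $\sup_I|u| \le (2AK/|E|)^{m-1}\sup_{E\setminus F}|u| \le (2AK/|E|)^{m-1}\operatorname{median}_E|u|$, whence $\operatorname{median}_E|u| \ge |E|^{m-1}(2AK)^{-(m-1)}\sup_I|u| \ge |E|^{m-1}(2AK)^{-(m-1)} C_1 =: C|E|^{m-1}$, using part (1) for the final inequality. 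Setting $C$ to be the minimum of the constants arising in (1) and (2) completes the proof.

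I expect the main obstacle to be part (1): Theorem \ref{NazTur} by itself only compares $\sup_I$ to $\sup_E$ and gives nothing when $E = I$, so establishing a uniform-in-$a$ positive lower bound genuinely requires an input beyond Tur\'an--Nazarov — either the almost-periodicity of trigonometric polynomials (relative density of $\ve$-almost periods) as sketched above, or an argument via the fact that $|u|^2$ is itself a trigonometric polynomial whose mean value over long intervals is $\sum_j |c_j|^2 > 0$, forcing $\sup$ over any length-$K$ window to stay bounded below. Part (2) is then routine once (1) is in hand, modulo the measurable selection of the half-set $F$, which follows from the intermediate value property of the distribution function of $|u|$ restricted to $E$.
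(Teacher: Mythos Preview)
Your proposal is correct, and for part (2) --- once you settle on the right version --- it is exactly the paper's argument: take $t$ to be the median of $|u|$ on $E$, set $F = \{x\in E : |u(x)|\ge t\}$ so that $|F|=|E|/2$, apply Theorem \ref{NazTur} on the containing interval $I$ of length $K$ with the bottom half $E\setminus F$ in the role of the small set, and then invoke part (1) to bound $\sup_I|u|$ from below. The paper writes this as ``Choose $t>0$ such that $|E_t|=|E|/2$'' and concludes in one displayed chain of inequalities.

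For part (1) there is a minor difference of route. The paper does not use almost periodicity; instead it invokes the Montgomery--Vaughan inequality to show that for $K>1/\delta$ (with $\delta$ the minimal frequency gap) the average of $|u|^2$ over any interval of length $K$ is at least $(1-1/(K\delta))\sum_j|c_j|^2>0$, which gives \eqref{NazInf1} directly for large $K$; Theorem \ref{NazTur} then handles $K\le 1/\delta$ by comparing a length-$K$ subinterval to a length-$(1/\delta+\ve)$ interval. Your Bohr almost-periodicity sketch is a valid alternative and yields the same two-step structure (large $K$ first, then reduce via Tur\'an--Nazarov). In fact, the alternative you mention at the end --- that the mean of $|u|^2$ over long intervals is $\sum_j|c_j|^2$ --- \emph{is} the Montgomery--Vaughan approach the paper uses, so you have identified both routes. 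The Montgomery--Vaughan version has the advantage of giving an explicit constant and threshold for $K$; the almost-periodicity version avoids citing an external $L^2$ inequality but leaves the constant implicit.
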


\begin{proof}
Let $0 < \delta < \min\{|a_j - a_k|: j\not= k\}$.  Suppose that $K > 1/\delta$.  The Montgomery-Vaughan inequality \cite[Theorem 1, Chapter 7]{Mon} states that for any $d_j \in \bc$ we have 
\[
\bigl(K - 1/\delta\bigr) \sum_{j=1}^n |d_j|^2 \le \int_0^K \biggl| \sum_{j = 1}^n d_j e^{2\pi i a_j t} \biggr|^2\ dt
\le \bigl(K + 1/\delta\bigr) \sum_{j=1}^n |d_j|^2.
\]
Therefore, for each $a\in \br$,
\[
\int_a^{a+K} \biggl| \sum_{j = 1}^n c_j e^{2\pi i a_j t} \biggr|^2\,
dt  = \int_0^{K} \biggl| \sum_{j = 1}^n c_j e^{2\pi i a_j a} e^{2\pi i a_j t} \biggr|^2\,
dt\ge \bigl(K - 1/\delta\bigr) \sum_{j=1}^n |c_j|^2.
\]
Thus, the average value of $|u|^2$ on $[a, a + K]$ is at least $(1-1/(K\delta)) \sum_{j=1}^n |c_j|^2$, which shows \eqref{NazInf1} for $K > 1/\delta$.   The case when $K\le 1/\delta$ follows from Theorem \ref{NazTur} and \eqref{NazInf1} for $K > 1/\delta$.  

Given $E \subset \R$ of diameter less than $K$, for $t > 0$ we define $E_t = \{x\in E: |u(x)| \le t\}$.  Choose $t > 0$ such that $|E_t| = |E|/2$.   By Theorem \ref{NazTur}, 
\[
\inf_{x\in E\setminus E_t} |u(x)| \ge t \ge \sup_{x\in E_t} |u(x)| \ge \frac{C}{(2AK)^{m-1}} |E|^{m-1}.
\]
Thus, $F=E\setminus E_t$ satisfies (\ref{NazInf2}).
\end{proof}

\begin{theorem}\label{gooddecay} Let $b_1$ be the smallest element of $B = \{b_1,\ldots,b_n\} \subset \R$.  Suppose that $u_1$ is a non-zero trigonometric polynomial, and that $u_2,\ldots,u_n$ are bounded.  If $f$ is a non-zero measurable function such that 
\[
\sum_{j=1}^n u_j(x) f(x + b_j) = 0 \qquad\text{for a.e. }x\in \R,
\]
then there exists a constant $c > 0$ such that $\limsup_{x\to \infty} f(x) e^{cx^2} > 0$.  In particular, if \eqref{fga} holds, then $f \ne 0$ has linearly independent time-frequency translates.
\end{theorem}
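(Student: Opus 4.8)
The plan is to prove the first assertion directly, by propagating a ``bump'' of $f$ out to $+\infty$ while tracking how fast its height can shrink; the linear independence statement then follows, since if some non-trivial combination $\sum_{(a,b)\in\Lambda}c_{(a,b)}M_aT_bf=0$ existed, the reduction of Section \ref{S2} would put it in the stated form (all the $u_i$ there being non-zero trigonometric polynomials, and hence in particular bounded, this is exactly the hypothesis of the theorem after reordering the shifts), and the first assertion would contradict \eqref{fga}. Throughout I would work with $|f|$: since $\lim_{x\to\infty}f(x)e^{cx^2}=0$ is equivalent to $\lim_{x\to\infty}|f(x)|e^{cx^2}=0$, this loses nothing.

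First I would normalize. Replacing $x$ by $x-b_1$ rewrites the hypothesis as
\[
v_1(x)f(x)=-\sum_{j=2}^n v_j(x)\,f(x+d_j)\qquad\text{for a.e. }x\in\R ,
\]
where $d_j:=b_j-b_1>0$, where $v_1(x):=u_1(x-b_1)$ is again a non-zero trigonometric polynomial, of some order $N$, and where $v_j(x):=u_j(x-b_1)$ is bounded for $j\ge 2$. Put $d:=\min_{j\ge2}d_j$, $D:=\max_{j\ge2}d_j$ and $C'':=\max_{j\ge2}\|v_j\|_\infty$; we may assume $n\ge 2$ and $C''>0$, since otherwise the identity forces $f=0$ a.e.\ on $\R$. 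Next, a degenerate case: if $f$ vanishes a.e.\ on a half-line $[R,\infty)$, then for a.e.\ $x>R-d$ every term on the right side above vanishes, so $v_1(x)f(x)=0$ there; as the zero set of a non-zero trigonometric polynomial is discrete and hence null, $f=0$ a.e.\ on $(R-d,\infty)$, and iterating pushes this to all of $\R$, contradicting $f\neq 0$. So from now on $f$ is not a.e.\ zero on any half-line; in particular there is a unit interval $J_0\subset[1,\infty)$ and an $m_0>0$ with $|E_0|>0$, where $E_0:=\{x\in J_0:|f(x)|\ge m_0\}$.

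The core of the argument is an iteration producing sets $E_k$, each contained in an interval of length $1$, with $|E_k|\ge\varepsilon_k>0$ and $|f|\ge m_k$ a.e.\ on $E_k$. Fix $K>1$ and let $C>0$ be the constant of Proposition \ref{NazInf} for $v_1$ and $K$. Given $E_k$, apply Proposition \ref{NazInf}(2) to get $F_k\subset E_k$ with $|F_k|=\tfrac12|E_k|$ and $|v_1|\ge C\varepsilon_k^{\,N-1}$ on $F_k$; feeding $|v_1(x)|\ge C\varepsilon_k^{\,N-1}$ and $|f(x)|\ge m_k$ into the identity gives, for a.e.\ $x\in F_k$,
\[
C\,\varepsilon_k^{\,N-1}m_k\ \le\ |v_1(x)f(x)|\ \le\ (n-1)\,C''\max_{2\le j\le n}|f(x+d_j)| ,
\]
so $\max_{j\ge2}|f(x+d_j)|\ge m_{k+1}:=C\varepsilon_k^{\,N-1}m_k/((n-1)C'')$ a.e.\ on $F_k$. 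By pigeonhole there is a fixed $j_k\in\{2,\dots,n\}$ and a set $F_k^{\ast}\subset F_k$ with $|F_k^{\ast}|\ge|F_k|/(n-1)$ on which $|f(x+d_{j_k})|\ge m_{k+1}$ a.e.; then $E_{k+1}:=F_k^{\ast}+d_{j_k}$ again sits in an interval of length $1$, has $|E_{k+1}|\ge\varepsilon_{k+1}:=\varepsilon_k/(2(n-1))$ and $|f|\ge m_{k+1}$ a.e.\ on it, and has moved right by an amount in $[d,D]$. The point to stress is that the diameter of the good set does not grow, so Proposition \ref{NazInf}(2) can be reapplied at every stage.

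Unwinding the recursions, $\varepsilon_k=\varepsilon_0\sigma^k$ with $\sigma:=1/(2(n-1))<1$, and
\[
m_k\ \ge\ m_0\,\beta^k\,\varepsilon_0^{\,(N-1)k}\,\sigma^{\,(N-1)k(k-1)/2},\qquad \beta:=\frac{C}{(n-1)C''},
\]
so $\log m_k\ge-\gamma k^2$ for some $\gamma>0$ and all large $k$ (the negative term of order $k^2$ absorbs the linear corrections; when $N=1$ this is trivial). Meanwhile the left endpoint $a_k$ of the $k$-th interval satisfies $a_k\ge kd$, so for a.e.\ $y\in E_k$ we have $y\ge kd$, hence $k^2\le y^2/d^2$ and $|f(y)|\ge m_k\ge e^{-\gamma k^2}\ge e^{-(\gamma/d^2)y^2}$. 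Taking any $c>\gamma/d^2$, we get $|f(y)|e^{cy^2}\ge e^{(c-\gamma/d^2)(kd)^2}$ for a.e.\ $y\in E_k\subset[kd,\infty)$; since each $E_k$ has positive measure, letting $k\to\infty$ forces $\limsup_{x\to\infty}|f(x)|e^{cx^2}=\infty$, which proves the first assertion. The hard part, and the reason Proposition \ref{NazInf} is needed rather than a bare division of the identity by $v_1(x)$, is exactly the zero set of $v_1$: pointwise division is useless because $1/|v_1|$ is unbounded (and possibly non-integrable) near zeros of $v_1$, whereas the half-measure device of Proposition \ref{NazInf}(2) replaces this obstruction by the mild loss $C\varepsilon_k^{\,N-1}$, which over $k$ steps accumulates only to $e^{-\gamma k^2}$ — still negligible against the freely chosen weight $e^{cx^2}$, because the bump advances only linearly in $k$.
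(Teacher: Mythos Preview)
Your proof is correct and follows essentially the same route as the paper's: both iterate Proposition \ref{NazInf}(2) on a shrinking-measure sequence of sets, use pigeonhole (the paper's Lemma \ref{easylemma}) to select a single shift $d_{j_k}$, and track the resulting geometric recursion $\varepsilon_{k+1}\asymp\varepsilon_k/(2n)$, $m_{k+1}\asymp\varepsilon_k^{N-1}m_k$, arriving at $\log m_k\gtrsim-\gamma k^2$ while the sets drift right by at least $d$ per step. The only cosmetic differences are your explicit handling of the half-line-support degenerate case and your choice to place $J_0\subset[1,\infty)$; the paper instead lets $X_1$ sit anywhere and simply observes $X_k\subset(a+kb_2,\infty)$, which suffices for the same conclusion.
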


\begin{proof}  Without loss of generality we can assume that $0 = b_1 < b_2 < \cdots < b_n$ and that  $||u_j||_\infty \le 1$ for $j=1,\ldots,n$. Suppose that a trigonometric polynomial $u_1$ is of order $m$.
We claim that there exist constants $\ve, \lambda>0$ and a sequence of measurable sets $\{X_k\}_{k\in\N} \subset \R $ such that for each $k\in \N$ we have:
\begin{enumerate}
\item  $|f(x)| \ge \ve^k (2n)^{-(m-1)(k-1)k/2}$ for all $x\in X_k$,
\item $|X_k| = \lambda/(2n)^{k-1}$, and
\item there exists $j=j(k) \in [2,n]$ such that $X_{k+1} \subset X_{k} + b_j$.
\end{enumerate}

First, we choose $\ve,\lambda>0$ and a set $X_1 \subset \R$ of finite diameter with positive measure $\lambda$ such that $|f(x)| > \ve$ on $X_1$. Next, suppose that $X_1,\ldots,X_k$ have been already defined. In order to define $X_{k+1}$ we use \eone to find $V\subset X_k$ such that $|V| = |X_k|/2$ and
\[
|u_1(x)| \ge C |X_k|^{m-1} = C \bigg(\frac{\lambda}{(2n)^{k-1}}\biggr)^{m-1}\qquad\text{for } x\in V.
\]
Then, for $x\in V$, 
\begin{eqnarray*}
\sum_{j=2}^n \bigl| u_j(x) f(x + b_j)\bigr| \ge
|u_1(x)| |f(x)|
\ge 
\ve^k (2n)^{-(m-1)(k-1)k/2}   C \bigg(\frac{\lambda}{(2n)^{k-1}}\bigg)^{m-1}.
\end{eqnarray*}
By Lemma \ref{easylemma},  there is a $j=j(k)\in [2,n]$ and a set $W \subset V$ of measure $|W|=|V|/n = \lambda/(2n)^{k}$ such that for $x\in W$,
\begin{align*}
|f(x+b_j)| \ge |u_{j}(x) f(x + b_{j}) | &\ge n^{-1} \ve^k (2n)^{-(m-1)(k-1)k/2}   C \bigg(\frac{\lambda}{(2n)^{k-1}}\bigg)^{m-1}
\\&
\ge C\lambda^{m-1} \ve^k (2n)^{-(m-1)k(k+1)/2}.
\end{align*} 
Thus, $X_{k+1} = W + b_j$ satisfies the required properties, provided we choose $\ve< C \lambda^{m-1}$.

By (3) the sets $X_k$ satisfy $X_k \subset (a+kb_2,\infty)$ for some $a\in \R$. Combining this with (1) shows the existence of a constant $c$ such that $f(x)\ge e^{-cx^2}$ for $x\in X_k$. This completes the proof of Theorem \ref{gooddecay}.
\end{proof}

\begin{lemma}\label{easylemma}  If $f_j \ge 0$ for all $j=1,\ldots,n$ and $\sum_{j=1}^n f_j(x) \ge M$ on a set $X$ of measure $\alpha$, then there is a subset $X' \subset X$ of measure $\alpha/n$ and $j$ such that $f_j(x) \ge M/n$ on $X'$.
\end{lemma}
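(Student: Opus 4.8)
The idea is a two-step pigeonhole: first at the level of points, then at the level of sets. I would begin by observing that for each fixed $x \in X$, the inequality $\sum_{j=1}^n f_j(x) \ge M$ together with $f_j(x) \ge 0$ for all $j$ forces $f_j(x) \ge M/n$ for at least one index $j$; indeed, if every term were strictly below $M/n$, the sum would be strictly below $M$. Thus, setting $A_j := \{x \in X : f_j(x) \ge M/n\}$ for $j = 1, \dots, n$, we obtain a cover $X = \bigcup_{j=1}^n A_j$.

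The second step is to apply countable (here, finite) subadditivity of Lebesgue measure to this cover: $\alpha = |X| \le \sum_{j=1}^n |A_j|$, so $|A_j| \ge \alpha/n$ for at least one $j$. Fixing such a $j$, I would then trim $A_j$ down to a subset $X' \subset A_j$ with $|X'| = \alpha/n$ exactly. This is legitimate because a measurable set of measure at least $\alpha/n$ contains a measurable subset of any prescribed measure up to its own — for instance by applying the intermediate value theorem to the continuous nondecreasing function $t \mapsto |A_j \cap [-t, t]|$, or simply by taking $X' = A_j$ in the degenerate case $\alpha = \infty$. Since $X' \subset A_j$, we have $f_j(x) \ge M/n$ for every $x \in X'$, which is exactly the assertion.

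There is essentially no obstacle in this argument; the only points worth a sentence are that the sets $A_j$ are measurable — which holds in every application of the lemma, since there the $f_j$ are measurable (in Theorem~\ref{gooddecay} each $f_j$ is the product of a trigonometric polynomial with the measurable function $f$) — and the routine extraction of a subset of precisely the right measure, which follows from the continuity of Lebesgue measure along an exhaustion of $A_j$ by sets of finite measure.
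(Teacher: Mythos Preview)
Your argument is correct and is essentially identical to the paper's own proof: define the sets $A_j=\{x\in X: f_j(x)\ge M/n\}$, observe that they cover $X$ by the pointwise pigeonhole, and conclude that some $A_j$ has measure at least $\alpha/n$. The only difference is that you add the (routine) step of trimming $A_j$ to obtain a subset of measure exactly $\alpha/n$, which the paper omits.
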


\begin{proof}  Let $X_j = \{x\in X: f_j(x) \ge M/n\}$.  Clearly, $\cup X_j = X$, so at least one of the $X_j$'s must have measure at least $\alpha / n$.
\end{proof}

\section{Exponential Decay Condition}\label{S3}

In the remainder of this paper, we prove the linear independence of time-frequency translates for functions with faster than exponential decay \eqref{fex} which satisfy some additional mild conditions. In particular, we shall establish our main linear independence result, Theorem \ref{decay}, for functions satisfying the decay condition \eqref{fel}.\footnote{After giving a talk on a preliminary version of this paper, we were informed of unpublished work by Benedetto and Bourouihiya \cite{BB}, who study the linear independence of time-frequency translates for functions with certain behaviors at infinity.  Their result most closely connected to the results in this section is that if $f$ and $f^\prime$ satisfy $e^{tx}f(x), e^{tx}f^\prime(x) \in L^2$ for all $t>0$, and $|f|$ is ultimately decreasing, then $f$ has linearly independent time-frequency shifts.}

We start by introducing a technical sufficient condition \eqref{tumb1} for the linear independence of time-frequency translates of a measurable function. This in turn becomes a foundation on which subsequent linear independence results in this section are shown. Define the space of all Lebesgue measurable on the real line by
\[
\mathcal M=\{f: \R \to \C \text{ is Lebesgue measurable}\}.
\]
As it is customary, we shall identify functions in $\mathcal M$ which are equal almost everywhere.

\begin{theorem}\label{tumb}
Let $f\in \mathcal M$. Suppose that for any non-zero trigonometric polynomial $u$, any finite subset $B=\{b_1,\ldots,b_n\} \subset \R_+$, and any $M>0$, the set
\begin{equation}\label{tumb1}
E=E_{u,M,B} = \bigg\{ x\in \R_+: |u(x)f(x)|> M \sum_{i=1}^n |f(x+b_i)|
\bigg\}
\end{equation}
has positive measure. Then, $\mathcal G(f,\R^2)$ is linearly independent. 

Moreover, if \eqref{tumb1} is relaxed to hold only for finite subsets $B \subset b\N$, where $b>0$, then $\mathcal G(f,\R\times b\Z)$ is linearly independent.
\end{theorem}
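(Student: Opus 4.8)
The plan is to argue by contradiction, following the reformulation of the HRT problem given in Section \ref{S2}. Suppose $\mathcal G(f,\R^2)$ is linearly dependent. Then there exist non-zero trigonometric polynomials $u_1,\dots,u_n$ and reals $b_1<\cdots<b_n$ with $\sum_{i=1}^n u_i(x) f(x-b_i)=0$ for a.e. $x$. After translating $x$ and relabeling, we may assume $0=b_1<b_2<\cdots<b_n$, so that the relation reads $u_1(x)f(x)=-\sum_{i=2}^n u_i(x)f(x+\tilde b_i)$ a.e., where $\tilde b_i=b_i-b_1>0$. Put $u=u_1$, $B=\{\tilde b_2,\dots,\tilde b_n\}\subset\R_+$, and choose $M$ to be any number strictly larger than $\max_{2\le i\le n}\|u_i\|_\infty$ (each $u_i$ is a trigonometric polynomial, hence bounded). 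Then on the set $E_{u,M,B}$ we would have, pointwise a.e.,
\[
M\sum_{i=2}^n |f(x+\tilde b_i)| < |u(x)f(x)| = \Bigl|\sum_{i=2}^n u_i(x) f(x+\tilde b_i)\Bigr| \le \Bigl(\max_{i}\|u_i\|_\infty\Bigr)\sum_{i=2}^n |f(x+\tilde b_i)|,
\]
which forces $\sum_{i=2}^n|f(x+\tilde b_i)|$ to vanish on $E$; but then the displayed strict inequality $|u(x)f(x)|>M\cdot 0=0$ cannot hold there either. Hence $E_{u,M,B}$ is a null set, contradicting the hypothesis. Therefore no such linear dependence exists, and $\mathcal G(f,\R^2)$ is linearly independent.

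For the ``moreover'' part, the same argument goes through almost verbatim, but one must be careful about which shifts $B$ appears in the hypothesis. If $\mathcal G(f,\R\times b\Z)$ is linearly dependent, the parameters satisfy $b_i\in b\Z$, so after translating by $b_1\in b\Z$ the differences $\tilde b_i=b_i-b_1$ lie in $b\Z\cap\R_+\subset b\N$. Thus the set $B=\{\tilde b_2,\dots,\tilde b_n\}$ is a finite subset of $b\N$, which is exactly the case in which the relaxed hypothesis guarantees that $E_{u,M,B}$ has positive measure; the contradiction is derived as before.

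I do not expect a serious obstacle here: the theorem is essentially a bookkeeping translation of the hypothesis, and the only points requiring a word of care are (i) the normalization/translation reducing to $b_1=0$ and $b_i>0$ for $i\ge 2$ (so that the remaining shifts are positive and the hypothesis applies with $B\subset\R_+$), (ii) the observation that the coefficient polynomials $u_2,\dots,u_n$ are bounded so that a finite $M$ beyond their sup-norms can be chosen, and (iii) in the lattice case, checking that the reduced shifts indeed land in $b\N$ rather than merely $b\Z$. Everything else is the pigeonhole-style inequality displayed above.
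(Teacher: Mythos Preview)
Your approach is correct and is considerably more direct than the paper's. The paper proves the theorem via a determinant argument: assuming a dependence $\sum_{i=1}^N u_i(x)f(x-b_i)=0$, it inductively constructs sets $Q_1,\dots,Q_N$ of positive measure (invoking the hypothesis \eqref{tumb1} at each inductive step) so that the $N\times N$ matrix $\bigl(u_j(x_i)f(x_i-b_j)\bigr)_{i,j}$ is nonsingular for almost every $(x_1,\dots,x_N)\in Q_1\times\cdots\times Q_N$, contradicting the vanishing of its row sums. Your argument sidesteps this machinery entirely: you isolate the single term with the extreme shift and apply the hypothesis just once, with the remaining coefficient polynomials absorbed into the constant $M$. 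Both routes reach the same contradiction, but yours is more elementary and uses the hypothesis for only one triple $(u,M,B)$.

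One small slip to fix: with the convention $\sum_i u_i(x)f(x-b_i)=0$ and $b_1<\cdots<b_n$, normalizing $b_1=0$ gives $u_1(x)f(x)=-\sum_{i\ge2}u_i(x)f(x-b_i)$ with $b_i>0$, i.e., arguments shifted to the \emph{left}; to obtain terms of the form $f(x+\tilde b_i)$ with $\tilde b_i>0$ (as the hypothesis requires, since $B\subset\R_+$), you should instead translate so that the \emph{largest} shift $b_n$ becomes $0$, take $u=u_n$, and set $\tilde b_i=b_n-b_i>0$ for $i<n$. With that correction the displayed inequality chain is exactly as you wrote, and the ``moreover'' part goes through since the differences $b_n-b_i$ remain in $b\N$.
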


\begin{proof}
Suppose for the sake of contradiction that there exist real numbers $b_1<\ldots<b_N$ and trigonometric polynomials $u_1,\ldots,u_N$ such that 
\begin{equation*}
\sum_{i=1}^N u_{i}(x) f(x-b_i)=0 \qquad\text{for a.e. }x \in\R.
\end{equation*}
We shall prove that our hypothesis \eqref{tumb1} implies that there exist sets of positive measure $Q_1,\ldots, Q_N \subset \R$ such that the matrix 
\[
M_N = \begin{pmatrix}
u_{1}(x_1) f(x_1 - b_{1}) &  u_{ 2}(x_1) f(x_1 -b_{2}) & \cdots & u_N(x_1) f(x_1 -b_N)\\
u_{1}(x_2) f(x_2 -b_{1}) & u_{2}(x_2) f(x_2 -b_{ 2}) &\cdots & u_N(x_2) f(x_2 -b_N)\\
 \vdots & & &\vdots\\
 u_{1}(x_N)  f(x_N - b_{1}) & u_{2}(x_N)  f(x_N - b_{2}) & \cdots & u_N(x_N) f(x_N  -b_N)
\end{pmatrix}
\]
has non-zero determinant for almost all $(x_1,\ldots,x_N) \in Q_1 \times \cdots \times Q_N$. This contradicts our hypothesis  that the sum of the rows of $M$ are zero almost everywhere.

For each $1\le n\le N$, and $(x_1,\ldots,x_n)\in \R^n$, we consider the principal $n\times n$ submatrix of $M_N$ given by 
\[
M_n = M_n(x_1,\ldots,x_n) =
\begin{pmatrix}
u_{1}(x_1) f(x_1 -b_1)  & \cdots & u_n(x_1) f(x_1 - b_n)\\
   \vdots &  &\vdots\\
 u_{n}(x_n)  f(x_n - b_1) & \cdots & u_n(x_n) f(x_n  - b_n)
\end{pmatrix}.
\]
We will show by induction the existence of sets of positive measure $Q_1,\ldots,Q_n$ and positive constants $c_1,\ldots,c_n$ and  $\delta_1,\ldots,\delta_n$ such that
\begin{align}\label{tumb3}
|f(x - b_j)| &\le c_n \qquad\text{for a.e. }x \in \bigcup_{i=1}^n Q_i,\ j=1, \ldots, N,
\\
\label{tumb4}
|\det M_n(x_1,\ldots,x_n)| &\ge \delta_n
\qquad\text{for a.e. }
(x_1,\ldots,x_n) \in Q_1 \times \cdots \times Q_n.
\end{align}

The base case $n=1$ follows trivially from the presence of strict inequality in \eqref{tumb1}. Suppose that \eqref{tumb4} holds for some $1\le n<N$. Let $U$ be an upper bound for the trigonometric polynomials $u_i$; that is, $|u_i(x)| \le U$ for all $1\le i  \le N$ and for all $x$.  Let $\Sigma$ be the set of all permutations of $\{1,\ldots,n+1\}$ such that $\sigma(n+1) \ne n+1$. Then, for any $(x_1,\ldots,x_n,x_{n+1}) \in Q_1 \times \ldots \times Q_n \times \R$,
\begin{equation}\label{tumb6}
\begin{aligned}
&| \det M_{n+1}(x_1,\ldots,x_n,x_{n+1}) | \\
&\ge | u_{n + 1}(x_{n+1}) f(x_{n+1} - b_{n + 1}) \det M_{n}(x_1,\ldots,x_n)| - \bigg| \sum_{\sigma \in \Sigma} \prod_{k=1}^{n+1} u_{\sigma(k)}(x_k) f(x_k - b_{\sigma(k)}) \bigg| \\
& \ge \delta_n | u_{n + 1}(x_{n+1}) f(x_{n+1} - b_{n + 1})|-
n! U^{n+1} (c_n)^n \sum_{i=1}^n |f(x_{n+1} - b_i)|.
\end{aligned}
\end{equation}
The last estimate is a consequence of breaking the sum over $\sigma \in \Sigma$ with $\sigma(n+1)=i$, where $1\le i \le n$.
By our hypothesis \eqref{tumb1}, the set 
\[
E=\bigg\{ x_{n+1} \in \R_+:  | u_{n + 1}(x_{n+1}+b_{n+1} ) f(x_{n+1} )|
>
M \sum_{i=1}^n |f(x_{n+1} +(b_{n+1}- b_i))|\bigg\},
\]
where $M=2 n! U^{n+1} (c_n)^n /\delta_n$, has positive measure. We momentarily set $Q_{n+1}=b_{n+1}+E$. Then, by \eqref{tumb6} we have that for almost every $(x_1,\ldots,x_{n+1}) \in Q_1 \times \ldots \times Q_{n+1}$,
\[
| \det M_{n+1}(x_1,\ldots,x_{n+1}) | \ge \frac{\delta_n}{2} | u_{n + 1}(x_{n+1}) f(x_{n+1} - b_{n + 1})|>0.
\] 
Thus, by restricting to a (positive measure) subset of $Q_{n+1}$ if necessary, we can find two  constants $c_{n+1}, \delta_{n+1}>0$ such that \eqref{tumb3} and \eqref{tumb4} hold, as desired. This completes the proof of the first part of Theorem \ref{tumb}. The ``moreover'' part follows immediately by requiring in \eqref{tumb2} that each $b_i \in b\Z$.
\end{proof}

We say that $f\in \mathcal M$ has {\it faster than exponential decay} if for all $c>0$, 
\begin{equation}\label{fex}
\lim_{x\to \infty} |f(x)| e^{cx} = 0.
\end{equation}
As the first application of Theorem \ref{tumb}, we shall give a proof of the linear independence of the Gabor system $\mathcal G(f,\Lambda)$ for functions $f \ne 0$ with faster than exponential decay when $\Lambda= \br \times b\Z$. In order to establish Theorem \ref{grz} we will need the following lemma about products of trigonometric polynomials, which is a consequence of the Tur\'an-Nazarov inequality.

\begin{lemma}\label{prod}
Let $u$ be a non-zero trigonometric polynomial and let $K>0$. There exists a constant $\eta=\eta(u,K)$ such that for any finite subset $B=\{b_1,\ldots,b_k\} \subset \R$ and for any interval $I$ of length $K$, there exists a measurable subset $E \subset I$ with $|E|>|I|/2$
such that
\begin{equation}\label{prod1}
\inf_{x\in E} \bigg| \prod_{i=1}^k u(x+b_i) \bigg| \ge e^{-\eta k}.
\end{equation}
\end{lemma}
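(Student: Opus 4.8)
The plan is to convert the multiplicative bound \eqref{prod1} into an additive one. Writing $\bigl|\prod_{i=1}^k u(x+b_i)\bigr| = \exp\bigl(\sum_{i=1}^k \log|u(x+b_i)|\bigr)$ and using the pointwise inequality $\log|u(y)| \ge -\log^+\!\bigl(1/|u(y)|\bigr)$ (with equality wherever $0<|u(y)|<1$), it suffices to produce $E\subset I$ with $|E|>|I|/2$ on which $h(x):=\sum_{i=1}^k \log^+\!\bigl(1/|u(x+b_i)|\bigr)\le \eta k$. The key observation is that $\log^+(1/|u|)$ is integrable over every interval of a fixed length, with a bound that does not depend on the location of the interval; then $h$ has small average over $I$ and Chebyshev's inequality furnishes $E$.

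First I would establish: there is $L=L(u,K)$ with $\int_J \log^+\!\bigl(1/|u(y)|\bigr)\,dy\le L$ for \emph{every} interval $J$ of length $K$. When $u$ has order $1$ this is immediate since $|u|$ is a nonzero constant. In general, writing $u$ as an exponential polynomial its frequencies are purely imaginary, so the factor $e^{|J|\max|\Re a_j|}$ in the Tur\'an--Nazarov inequality (Theorem \ref{NazTur}) is $1$; applying it with $F=\{x\in J:|u(x)|<t\}$ and invoking $\sup_{x\in J}|u(x)|\ge C=C(u,K)$ from Proposition \ref{NazInf}(1) yields $|\{x\in J:|u(x)|<t\}|\le A K (t/C)^{1/(m-1)}$, where $m$ is the order of $u$ and $A$ the absolute constant of Theorem \ref{NazTur}. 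By the layer-cake formula, $\int_J \log^+\!\bigl(1/|u|\bigr)\,dy=\int_0^\infty |\{|u|<e^{-s}\}\cap J|\,ds\le \int_0^\infty \min\bigl(K,\,AK(e^{-s}/C)^{1/(m-1)}\bigr)\,ds=:L<\infty$, and $L$ depends only on $u$ and $K$.

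Now fix $B=\{b_1,\dots,b_k\}$ and an interval $I$ with $|I|=K$. The change of variables $y=x+b_i$ maps $I$ onto the length-$K$ interval $I+b_i$, so
\[
\int_I h(x)\,dx=\sum_{i=1}^k\int_{I+b_i}\log^+\!\bigl(1/|u(y)|\bigr)\,dy\le kL.
\]
By Chebyshev's inequality $\bigl|\{x\in I: h(x)>3kL/K\}\bigr|\le kL/(3kL/K)=K/3$, so $E:=\{x\in I: h(x)\le 3kL/K\}$ has $|E|\ge K-K/3=2K/3>|I|/2$. On $E$, $\sum_{i=1}^k\log|u(x+b_i)|\ge -h(x)\ge -3kL/K$, giving $\bigl|\prod_{i=1}^k u(x+b_i)\bigr|\ge e^{-3kL/K}$, i.e.\ \eqref{prod1} with $\eta:=3L/K=\eta(u,K)$.

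The one substantive point — and the step most prone to error — is the uniform bound in the second paragraph: one must use that the frequencies are real (so the exponential factor in Tur\'an--Nazarov disappears) and that both Theorem \ref{NazTur} and Proposition \ref{NazInf}(1) are uniform in the location of the interval, so that $L$, hence $\eta$, depends on $u$ and $K$ only and not on $B$ or on where $I$ sits. The rest is bookkeeping and Chebyshev's inequality.
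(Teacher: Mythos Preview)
Your proof is correct and follows essentially the same approach as the paper: bound the sublevel sets $|\{x\in J:|u(x)|<t\}|\le C' t^{1/(m-1)}$ via Tur\'an--Nazarov together with Proposition~\ref{NazInf}(1), integrate by the layer-cake formula to get a uniform $L^1$ bound on $\log_-|u|=\log^+(1/|u|)$ over length-$K$ intervals, sum over translates, and finish with Chebyshev. The only cosmetic differences are that the paper first normalizes to $\|u\|_\infty\le 1$ (so that $\log_-$ of the product equals the sum of the $\log_-$'s exactly), whereas you bypass this with the pointwise inequality $\log|u|\ge -\log^+(1/|u|)$; otherwise the arguments are the same.
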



\begin{proof}
Let
$u(x) = \sum_{j=1}^m c_j e^{2\pi i a_j x}$, 
where $c_j\in {\mathbb C}$, $a_j\in \R$, be a non-zero trigonometric polynomial of order $m$.
Without loss of generality we can assume that $||u||_\infty \le 1$. Indeed, replacing $u$ by $u/||u||_\infty$ in \eqref{prod1} leads to an additional factor $||u||_\infty^k$ which is absorbed by the right hand side of \eqref{prod1}.

Let $I$ be an interval of length $K$. For any $b\in \R$ and $t>0$ we define 
\begin{equation}\label{prod1a}
E_{b}(t)= \{x\in I: |u(x+b)|< t\}.
\end{equation} 
By Theorem \ref{NazTur} and Proposition \ref{NazInf}, we have
\[
\sup_{x\in E_b(t)} |u(x+b)| = t \ge C (AK)^{1-m} |E_b(t)|^{m-1}.
\]
Thus,
\begin{equation}\label{prod2}
|E_b(t)| \le C t^{1/(m-1)} \qquad\text{for all } t>0.
\end{equation}
Hence,
\begin{multline*}
\int_I \log_- |u(x+b)| dx = \int_0^1 |\{x\in I: |u(x+b)|<t\}|\frac{dt}{t} = \int_0^1 |E_b(t)| \frac{d t}{t} 
\\
\le C \int_0^1 t^{-1+1/(m-1)} dt =C(m-1)<\infty,
\end{multline*}
where $\log_-t=\max(0,-\log t)$ is the negative part of $\log$.
Since $||u||_\infty \le 1$, we have
\begin{equation}\label{prod4}
\int_I
\log_-\bigg|\prod_{i=1}^k u(x+b_i) \bigg| dx = \sum_{i=1}^k 
\int_I \log_-|u(x+b_i)| dx \le C(m-1)k.
\end{equation}
For $t>0$ define
\[
E(t) =\bigg\{x\in I: \log_- \bigg|\prod_{i=1}^k u(x+b_i)\bigg|>t \bigg\}.
\]
By Chebyshev's inequality and \eqref{prod4} we have
\[
|E(t)|=\bigg|\bigg\{x\in I: \bigg|\prod_{i=1}^k u(x+b_i) \bigg| <e^{-t} \bigg\}\bigg| 
\le C(m-1)k/t \qquad\text{for }t>0.
\]
By setting $t= 3 C(m-1)k/|I|$, we have $|E(t)| \le 1/3|I|$. Set $E=I \setminus E(t)$. Then, $|E| \ge 2/3|I|$ and
\[
\bigg| \prod_{i=1}^k u(x+b_i) \bigg| \ge e^{-\eta k} \qquad\text{for } x\in E,
\]
where $\eta= 3C(m-1)/|I|$. This completes the proof of Lemma \ref{prod}.
\end{proof}

\begin{theorem}\label{grz}
Suppose that $0 \ne f\in \mathcal M$ has faster than exponential decay. Then, $\mathcal G(f,\Lambda)$ is linearly independent for $\Lambda = \R \times b\Z$, where $b>0$.
\end{theorem}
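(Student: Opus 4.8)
The plan is to verify the hypothesis \eqref{tumb1} of Theorem \ref{tumb} in its ``moreover'' form, i.e., for finite sets $B \subset b\N$, which then yields linear independence of $\mathcal G(f,\R\times b\Z)$. So fix a non-zero trigonometric polynomial $u$, a finite set $B=\{b_1,\ldots,b_k\}\subset b\N$, and $M>0$; I must produce a set of positive measure on which $|u(x)f(x)| > M\sum_{i=1}^k |f(x+b_i)|$. The key observation is that the translation step size $b$ is fixed, so I should work interval-by-interval along the partition of $\R_+$ into the intervals $I_\ell = [\ell b, (\ell+1)b)$ for $\ell\in\N$. On each such interval the competing terms $f(x+b_i)$ are translates of $f$ restricted to intervals $I_{\ell + b_i/b}$ lying to the right, and faster than exponential decay \eqref{fex} should make the right-hand side negligible compared with the left as $\ell\to\infty$, provided $u$ does not shrink too fast. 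This is exactly where Lemma \ref{prod} enters: applied with $K = b$, it gives, for each $\ell$, a subset $E_\ell \subset I_\ell$ with $|E_\ell| > b/2$ on which $|u(x)| \ge e^{-\eta}$ (here $k=1$ in the lemma, but I will want the product version below).

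Here is the mechanism in more detail. Suppose, for contradiction, that $E_{u,M,B}$ is null, i.e., for a.e.\ $x\in\R_+$,
\[
|u(x)f(x)| \le M\sum_{i=1}^k |f(x+b_i)|.
\]
Iterating this inequality along an arithmetic progression with common difference $b_1$ (the smallest element of $B$, say): on the set where $u$ and its relevant translates are all bounded below, each application transfers a lower bound on $|f|$ at a point to a lower bound on $|f|$ at some point at least $b_1$ further to the right, at the cost of a multiplicative factor controlled by $M$ and by the lower bounds for $u$. Concretely, starting from a set $X_1$ of positive measure on which $|f|\ge \ve>0$, I build sets $X_j$ with $|X_j|$ decreasing geometrically (by a factor $2k$ at each stage, as in the proof of Theorem \ref{gooddecay}), with $X_{j+1}\subset X_j + b_{i(j)}$ for some $i(j)$, and with $|f|\ge \ve^j \cdot (\text{product of } u\text{-lower bounds})$ on $X_j$. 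The crucial point is that by applying Lemma \ref{prod} on the single interval of length $K$ containing (a translate of) $X_j$ — or rather, since $X_j$ may meet several unit-$b$ cells, splitting into finitely many pieces and using the product form \eqref{prod1} with the accumulated translation vector of length $j$ — the product of the $u$-lower bounds encountered through stage $j$ is at least $e^{-\eta j}$ for a fixed $\eta$. Hence on $X_j$ we get $|f(x)| \ge c\,(\ve e^{-\eta}/M)^{j}$ for a constant $c$ and, by construction, $X_j \subset (a + j b_1, \infty)$ for some fixed $a$. Writing $\rho = \ve e^{-\eta}/M$ and choosing $c' = -\log\rho/b_1 > 0$ once $\ve$ is taken small enough that $\rho < 1$, this forces $|f(x)| \ge e^{-c' x}$ for $x$ in the sets $X_j$, which march off to infinity — contradicting \eqref{fex} with $c = 2c'$. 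Therefore $E_{u,M,B}$ has positive measure, and Theorem \ref{tumb} applies.

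The main obstacle I anticipate is the bookkeeping in the geometric descent: at each stage the set $X_j$ need not be contained in a single cell $I_\ell$, so I must cover it by at most a bounded number of cells, apply Lemma \ref{prod} (in its product form, with the running list $b_{i(1)},\ldots,b_{i(j-1)}$ of shifts, whose length is $j-1$) on each cell to extract a subset of more than half measure where $\prod |u(x + b_{i(s)})| \ge e^{-\eta(j-1)}$, keep a cell whose contribution has the largest measure, then split the excess between the $k$ competing translates via Lemma \ref{easylemma}. Each step loses a factor of $2k$ (or a fixed constant) in measure and a factor of roughly $\rho$ in the lower bound for $|f|$, and one must check that the measures stay positive — which they do, since $|X_j| \asymp \lambda/(2k)^{j}$ stays strictly positive for every finite $j$, and only finitely many steps are needed to reach a contradiction once the exponential-decay estimate kicks in. A secondary point is that $u$ and all its translates $u(\cdot + b)$, $b\in b\N$, must be uniformly bounded — true since they are all bounded by $\|u\|_\infty$ — and uniformly bounded \emph{below} on large subsets of each cell, which is precisely the content of Lemma \ref{prod}; the uniformity of $\eta$ in the shift vector $B$ (it depends only on $u$ and $K=b$) is what makes the whole argument go through.
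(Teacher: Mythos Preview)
Your overall plan --- verify the hypothesis of Theorem \ref{tumb} by contradiction, iterate the inequality, and invoke Lemma \ref{prod} --- is right, but the descent mechanism you describe has a genuine gap in how Lemma \ref{prod} is applied. That lemma takes an \emph{interval} $I$ of length $K$ and returns a subset $E\subset I$ with $|E|>|I|/2$ on which the product is bounded below by $e^{-\eta k}$. In your construction the sets $X_j$ shrink geometrically, $|X_j|\sim \lambda/(2k)^{j}$, so after a few steps $|X_j|\ll K/2$: the half-measure subset of the cell that Lemma \ref{prod} hands you may miss $X_j$ entirely, and there is no reason the product bound should hold on any positive-measure part of $X_j$. (Trying to pre-select the good set before the descent fails too: the shifts $b_{i(1)},\ldots,b_{i(j-1)}$ are determined recursively by Lemma \ref{easylemma}, so there are $k^{j-1}$ possible shift-lists and the union of the corresponding ``bad'' half-intervals can cover $I$.) If instead you use Proposition \ref{NazInf}(2) at each step --- which \emph{does} return a half-measure subset of $X_j$ itself --- the lower bound is only $C|X_j|^{m-1}$, and the accumulated product over $j$ stages is of order $(2k)^{-(m-1)j(j+1)/2}$: a Gaussian bound, recovering Theorem \ref{gooddecay} but not Theorem \ref{grz}.

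The paper's proof avoids this by never passing to shrinking sets. Without loss $B=\{b,2b,\ldots,nb\}$, and one proves by induction on $k$ the \emph{pointwise} inequality
\[
|f(x)|\prod_{j=0}^{k}|u(x+jb)|\;\le\; M(M+||u||_\infty)^{k}\sum_{i=1}^{n}|f(x+(k+i)b)|
\qquad\text{for a.e.\ }x\in\R_+.
\]
The product on the left runs over the \emph{fixed} arithmetic progression $0,b,\ldots,kb$ --- no recursive choices appear --- so Lemma \ref{prod} can be applied once, on the fixed interval $I$ chosen via Lebesgue density so that $|\{x\in I:|f(x)|>\ve\}|>|I|/2$. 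It gives $E_k\subset I$ with $|E_k|>|I|/2$ on which the product exceeds $e^{-\eta(k+1)}$; since $\{|f|>\ve\}$ also has measure $>|I|/2$, the two intersect in positive measure, and there one reads off $\sum_i|f(x+(k+i)b)|\ge \ve e^{-\eta(k+1)}M^{-1}(M+||u||_\infty)^{-k}$. A single use of Lemma \ref{easylemma} then gives $|f(y)|$ bounded below by a fixed exponential in $k$ on a positive-measure set in $[a+(k+1)b,\infty)$, contradicting \eqref{fex}. The entire argument stays on $I$; that is what makes the half-interval set from Lemma \ref{prod} usable.
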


\begin{proof}
By Theorem \ref{tumb} it suffices to show that for any trigonometric polynomial $u \ne 0$, $n\in \N$, and $M>0$,
\[
E =  \bigg\{ x\in \R_+: |u(x)f(x)|> M \sum_{i=1}^n |f(x+ib)|
\bigg\}
\]
has positive measure. On the contrary, suppose that there exists some choice of $u$, $n$, and $M>0$ such that
\begin{equation}\label{grz2}
|u(x)f(x)| \le M \sum_{i=1}^n |f(x+ib)|
\qquad\text{for a.e. } x\in \R_+.
\end{equation}
By induction this implies that for any $k\in\N$, we have
\begin{equation}\label{grz4}
|f(x)|\prod_{j=0}^k|u(x+jb)| \le M (M+||u||_\infty)^k \sum_{i=1}^n |f(x+(k+i)b)|
\qquad\text{for a.e. } x\in \R_+.
\end{equation}
Indeed, the base case $k=0$ corresponds to \eqref{grz2}. Assuming that \eqref{grz4} holds for $k-1$, we have
\begin{align*}
|f(x)|&\prod_{j=0}^k|u(x+jb)| 
\\
&\le 
M (M+||u||_\infty)^{k-1} \bigg( |u(x+kb)f(x+kb)|+ |u(x+kb)|\sum_{i=2}^n |f(x+((k-1)+i)b)| \bigg)
\\
&\le 
M (M+||u||_\infty)^{k-1} \bigg( M \sum_{i=1}^n |f(x+(k+i)b)| + ||u||_\infty \sum_{i=1}^{n-1} |f(x+((k+i)b)| \bigg).
\end{align*}
Thus, \eqref{grz4} holds for all $k\in\N$. It remains to see that \eqref{grz4} can not hold for functions $f$ with faster than exponential decay.

By the Lebesgue differentiability theorem applied to the set $\{x\in \R: |f(x)|>\ve\}$, there exists an interval $I \subset \R$ and a constant $\ve>0$ such that
\begin{equation}\label{grz6}
|\{x\in I: |f(x)|>\ve \}| > |I|/2.
\end{equation}
Moreover, we can assume that $I \subset \R_+$ since it is well known that we have linear independence of $\mathcal G(f,\R^2)$ for functions supported in a half-line. Combining \eqref{grz4} and \eqref{grz6} with Lemma \ref{prod} yields for each $k\in\N$, a set $E_k \subset I$ of positive measure such that
\[
M(M+||u||_\infty)^k  \sum_{i=1}^n |f(x+(k+i)b)| >\ve e^{-\eta(k+1)}
\qquad
\text{for }x\in E_k.
\]
By Lemma \ref{easylemma}, there exists $i=i(k)\in [1,n]$ such that
\[
|f(x+(k+i)b)| >  \frac{\ve}{e^\eta Mn} (M+||u||_\infty)^{-k} e^{-\eta k}
\]
for $x$ in some subset of $I$ with positive measure. This contradicts 
our hypothesis that $f$ has faster than exponential decay and completes the proof of Theorem \ref{grz}.
\end{proof}

As the second application of Theorem \ref{tumb}, we shall deduce the linear independence of the Gabor system $\mathcal G(f,\R^2)$ for functions with faster than exponential decay which are quasi-monotone. We say that $f\in \mathcal M$ is {quasi-monotone} if
\begin{equation}\label{qm0}
\forall b>0 \ \exists C=C(b)>0 \ \forall x >0 \qquad  |f(x+b)| < C |f(x)|.
\end{equation}

\begin{theorem}\label{qm}
Suppose that $0 \ne f\in \mathcal M$ has faster than exponential decay and $f$ is quasi-monotone. Then, $\mathcal G(f,\R^2)$ is linearly independent.
\end{theorem}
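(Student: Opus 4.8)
The plan is to verify hypothesis \eqref{tumb1} of Theorem \ref{tumb} with $\Lambda = \R^2$: I must show that for every non-zero trigonometric polynomial $u$, every finite set $B = \{b_1,\ldots,b_n\} \subset \R_+$, and every $M > 0$, the set $E = E_{u,M,B}$ has positive measure. Arguing by contradiction, suppose instead that $|u(x)f(x)| \le M\sum_{i=1}^n |f(x+b_i)|$ for a.e.\ $x \in \R_+$. The first step will be to use quasi-monotonicity to collapse the sum on the right to a single term: putting $b = \min_{1\le i \le n} b_i > 0$ and applying \eqref{qm0} with the shift $b_i - b \ge 0$ at the point $x + b > 0$ (the case $b_i = b$ being trivial), I obtain a constant $C>0$, depending only on $u$ and $B$, with $\sum_{i=1}^n |f(x+b_i)| \le nC\,|f(x+b)|$ for a.e.\ $x > 0$. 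Hence $|u(x)f(x)| \le M'\,|f(x+b)|$ a.e.\ on $\R_+$, where $M' = MnC$.

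From this point the argument will follow the pattern of the proof of Theorem \ref{grz}. Multiplying $|u(x)f(x)| \le M'\,|f(x+b)|$ by $|u(x+b)|$ and iterating with $x$ replaced by $x+b$, a straightforward induction on $k$ gives
\[
|f(x)|\prod_{j=0}^{k}|u(x+jb)| \le (M')^{k+1}\,|f(x+(k+1)b)| \qquad\text{for a.e.\ } x \in \R_+.
\]
Since time-frequency translates of functions supported in a half-line are linearly independent, I may assume $f$ is not a.e.\ zero on $(c,\infty)$ for any $c$, so the Lebesgue differentiation theorem applied to $\{x : |f(x)| > \ve\}$ produces an interval $I \subset \R_+$ and $\ve > 0$ with $|\{x \in I : |f(x)| > \ve\}| > |I|/2$. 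Next I would invoke Lemma \ref{prod} with the $k+1$ shifts $0, b, 2b, \ldots, kb$ to get $\eta = \eta(u,|I|)$ and, for each $k$, a set $E_k' \subset I$ with $|E_k'| > |I|/2$ on which $\bigl|\prod_{j=0}^{k} u(x+jb)\bigr| \ge e^{-\eta(k+1)}$; intersecting $E_k'$ with $\{x \in I : |f(x)| > \ve\}$ yields a positive-measure set $E_k \subset I$ on which $|f(x+(k+1)b)| \ge \ve\,(M'e^{\eta})^{-(k+1)}$.

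Finally, after enlarging $M'$ so that $M'e^\eta \ge 1$ and writing $I \subset [0,L]$, for $x \in E_k$ and $y = x + (k+1)b$ one has $y \ge (k+1)b$, hence $|f(y)| \ge \ve\,(M'e^\eta)^{-y/b}$ on a positive-measure subset of $[(k+1)b,\, L+(k+1)b]$, and this holds for every $k$. Letting $k \to \infty$ then shows $\limsup_{y\to\infty} |f(y)|\,e^{cy} > 0$ with $c = b^{-1}\log(M'e^\eta)$, which contradicts the faster than exponential decay \eqref{fex}; therefore \eqref{tumb1} holds and Theorem \ref{tumb} gives the conclusion.

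The genuinely new ingredient here is just the quasi-monotone reduction of an arbitrary finite $B \subset \R_+$ to the single shift $b = \min B$ — this is exactly what upgrades Theorem \ref{grz} (where $B \subset b\Z$) to the full group $\R^2$ — while the analytic heart is already contained in Lemma \ref{prod} (which keeps $\prod_j |u(x+jb)|$ bounded below off a small exceptional set) and in Theorem \ref{tumb}. Accordingly I expect no serious obstacle, the only point requiring care being that the constant $C$ coming from \eqref{qm0} is finite, which holds because $B$ is finite.
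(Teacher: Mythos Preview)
Your proof is correct and follows essentially the same approach as the paper: use quasi-monotonicity to reduce the finite set $B$ to the single shift $b=\min B$ (the paper writes this as the inclusion $E_{u,\tilde M,b_1}\subset E_{u,M,B}$), and then invoke the iteration/Lemma~\ref{prod} argument from the proof of Theorem~\ref{grz} to get a contradiction with faster than exponential decay. The only difference is cosmetic: the paper simply cites ``the proof of Theorem~\ref{grz}'' for the single-shift case, whereas you spell out that iteration (in fact in the slightly cleaner form $(M')^{k+1}|f(x+(k+1)b)|$ that the $n=1$ case allows).
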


\begin{proof}
Let $u$, $M$, and $B$ be as in Theorem \ref{tumb}. Define the constant 
\[
\tilde M = M \bigg(1+ \sum_{i=2}^n C(b_i-b_1) \bigg),
\]
where $b_1$ is the smallest element of $B=\{b_1,\ldots,b_n\} \subset \R_+$, and $C(b)$ is the same as in \eqref{qm0}.
The quasi-monotonicity of $f$ implies that the set $E_{u,M,B}$ in \eqref{tumb1} satisfies
\begin{equation}\label{qm2}
E_{u,\tilde M, b_1} = \bigg\{ x\in \R_+: |u(x)f(x)|> \tilde M |f(x+b_1)| \bigg\} \subset E_{u,M,B}.
\end{equation}
The proof of Theorem \ref{grz} shows that the set on the left-hand side of \eqref{qm2} has positive measure. Consequently, $E_{u,M,B}$ also has positive measure. Hence, we reach the required conclusion by applying Theorem \ref{tumb}.
\end{proof}

As the third application of Theorem \ref{tumb}, we shall give the proof Theorem \ref{decay}. That is, we establish the linear independence of the Gabor system $\mathcal G(f,\R^2)$ for functions with slightly faster decay than \eqref{fex}, but without any extra restrictions such as those in Theorems \ref{grz} and \ref{qm}. In order to establish this result, we will need the following more elaborate variant of Lemma \ref{prod} about products of trigonometric polynomials.

\begin{lemma}\label{product}
Let $u$ be a non-zero trigonometric polynomial, let $B=\{b_1,\ldots,b_n\} \subset \R$ be a finite set of real numbers, and let $K>0$. Then, there exists a constant $\eta=\eta(u,n,K)>0$ such that for any interval $I$ of length $K$ and any $k\in\N$, there exists a measurable subset $E \subset I$ with $|E|>|I|/2$
such that
\begin{equation}\label{product1}
\sup_{x\in E} \sum_{i(1)=1}^n \ldots \sum_{i(k)=1}^n \bigg| \prod_{j=1}^k u\bigg(x+\sum_{l=1}^j b_{i(l)}\bigg)\bigg|^{-1} \le e^{\eta k \log k}.
\end{equation}
\end{lemma}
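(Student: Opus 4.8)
The plan is to iterate the single-polynomial estimate from the proof of Lemma \ref{prod} but tracking how the exceptional sets accumulate as the word length $k$ grows. Fix an interval $I$ of length $K$. For a single shift $b$, recall from \eqref{prod2} that the sublevel set $E_b(t)=\{x\in I:|u(x+b)|<t\}$ satisfies $|E_b(t)|\le C t^{1/(m-1)}$, where $m$ is the order of $u$ and $C$ depends only on $u$ and $K$ (we again normalize $\|u\|_\infty\le 1$, absorbing the resulting factor into the right-hand side). The key point is that this bound is uniform over the shift $b$, so it applies to every factor $u\!\left(x+\sum_{l=1}^j b_{i(l)}\right)$ appearing in \eqref{product1}, regardless of which word $i(1),\ldots,i(k)\in\{1,\ldots,n\}^k$ we are looking at. First I would fix a threshold $t_k>0$ to be chosen, and let $G=\{x\in I: |u(x+b)|\ge t_k \text{ for every shift } b \text{ of the form } \sum_{l=1}^j b_{i(l)},\ 1\le j\le k,\ i(\cdot)\in\{1,\ldots,n\}\}$. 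The number of distinct such shifts is at most $n+n^2+\cdots+n^k\le k n^k$ (a crude bound; one could do better, but it will not matter), so by subadditivity $|I\setminus G|\le k n^k\cdot C t_k^{1/(m-1)}$. Choosing $t_k$ so that this is at most $|I|/2$, i.e. $t_k = c\,(k n^k)^{-(m-1)}$ for an appropriate constant $c=c(u,K,I)$, gives a set $E:=G$ with $|E|>|I|/2$.

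On this set $E$, every factor in every product is at least $t_k$, so each product $\prod_{j=1}^k u(\cdots)$ has modulus at least $t_k^k$, hence each summand in \eqref{product1} is at most $t_k^{-k}$. There are $n^k$ words $i(1),\ldots,i(k)$ (the outer $k$ nested sums each run over $n$ values), so the whole sum is bounded by $n^k t_k^{-k} = n^k\big(c^{-1}(k n^k)^{m-1}\big)^k = c^{-k} n^k (k n^k)^{(m-1)k}$. Taking logarithms, $\log$ of this is $-k\log c + k\log n + (m-1)k(\log k + k\log n)$, which is dominated by a term of the form $\eta k\log k$ once we also absorb the $k^2\log n$ term — but here I need to be careful, because $k^2\log n$ grows faster than $k\log k$. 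This is the point where the crude counting of shifts is too wasteful: I should instead bound the number of distinct shifts $\sum_{l=1}^j b_{i(l)}$ of length exactly $j$ more carefully. The shifts form a set contained in $\{\,\sum_{i=1}^n m_i b_i : m_i\ge 0,\ \sum m_i = j\,\}$, which has at most $\binom{j+n-1}{n-1}\le (j+1)^{n-1}$ elements; summing over $j\le k$ gives at most $k\cdot(k+1)^{n-1}\le C_n k^n$ distinct shifts, a polynomial in $k$ rather than exponential. The hard part, and the reason Lemma \ref{product} is stated as a separate and "more elaborate" lemma, is exactly this: we must exploit that the shifts generated by an $n$-element set $B$ under $j$-fold addition lie in a set of only polynomial size in $j$, not the naive $n^j$.

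With the corrected count of at most $C_n k^n$ distinct shifts, I would re-run the argument: choose $t_k$ so that $C_n k^n\cdot C t_k^{1/(m-1)}\le |I|/2$, i.e. $t_k = c\, k^{-n(m-1)}$ with $c=c(u,n,K,I)$. Then on $E=G$ (which has $|E|>|I|/2$) every summand in \eqref{product1} is at most $t_k^{-k}=c^{-k}k^{\,n(m-1)k}$, and there are $n^k$ summands, so the sum is at most $n^k c^{-k} k^{\,n(m-1)k}$. Its logarithm is $k\log n - k\log c + n(m-1)k\log k \le \eta k\log k$ for $k\ge 2$, where $\eta=\eta(u,n,K)$ is chosen large enough to absorb the linear-in-$k$ terms (for $k=1$ the bound is trivial after possibly enlarging $\eta$). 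Note $\eta$ depends on $I$ only through $|I|=K$, since $C$ and the choice of $c$ depend on $u$ and $K$ only, so the dependence claimed in the statement is correct. This completes the proof.
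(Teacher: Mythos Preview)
Your proof is correct and follows essentially the same approach as the paper: both rely on the sublevel set estimate \eqref{prod2} together with the key combinatorial observation that the number of \emph{distinct} shifts $\sum_{l=1}^j b_{i(l)}$ with $j\le k$ is only polynomial in $k$ (at most $C_n k^n$), then take a union bound over these shifts to find a set of measure at least $|I|/2$ on which the sum is controlled. The only cosmetic difference is that the paper argues by contrapositive (if the sum exceeds $t$ at some $x$, then some factor must be small, so $x$ lies in one of the sublevel sets), whereas you directly construct the good set $G$ on which every factor is bounded below; the resulting threshold and final $e^{\eta k\log k}$ bound are identical.
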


\begin{proof}
Without loss of generality we can assume that $||u||_\infty \le 1$ as in Lemma \ref{prod}. Let $I$ be an interval of length $K$. For fixed $k\in\N$ define the set
\[
\Sigma=\bigg\{ \sum_{i=1}^n\alpha_i b_i: \sum_{i=1}^n \alpha_i \le k,\ \alpha_i\in\N_0 \bigg\}.
\]
Since the sequence $(\alpha_1, \ldots, \alpha_n, k-(\alpha_1+\ldots+\alpha_n))$ represents a partition of $k$ into $n+1$ blocks, we have
\begin{equation}\label{product2}
\# |\Sigma| \le \binom{k+n}{k} \le C k^{n}.
\end{equation}
For any subset $\sigma=\{\sigma(1),\ldots,\sigma(k)\} \subset \Sigma$ of size $k$, define the function
\[
f_\sigma(x) = \prod_{i=1}^k \frac{1}{|u(x+\sigma(i))|}.
\]

Let $t>0$. Suppose that for some $x\in I$ we have
\begin{equation}\label{product4}
\sum_{i(1)=1}^n \ldots \sum_{i(k)=1}^n \bigg| \prod_{j=1}^k u\bigg(x+\sum_{l=1}^j b_{i(l)}\bigg)\bigg|^{-1} >t.
\end{equation}
This implies that there exists a subset $\sigma\subset \Sigma$ of size $k$ such that $f_\sigma(x)>t/n^k$. Since $f_\sigma$ is a product of $k$ functions, at least one of them must take value greater than $(t/n^k)^{1/k}$. That is,
\[
x \in \bigcup_{i=1}^n E_{\sigma(i)} \bigg( \frac{n}{t^{1/k}} \bigg) 
\subset \bigcup_{b\in \Sigma}  E_b \bigg( \frac{n}{t^{1/k}} \bigg),
\]
where $E_b(t)$ is given by \eqref{prod1a}. Thus, using \eqref{prod2} and \eqref{product2}, the Lebesgue measure of the set of points $x\in I$ satisfying \eqref{product4} is bounded by
\[
\bigg|  \bigcup_{b\in \Sigma}  E_b \bigg( \frac{n}{t^{1/k}} \bigg) \bigg| \le \#|\Sigma| \max_{b\in \Sigma} \bigg|E_b\bigg( \frac{n}{t^{1/k}} \bigg) \bigg| \le C k^n \frac{n^{1/(m-1)}}{t^{1/k(m-1)}} \le C' k^n t^{-\frac{1}{k(m-1)}}.
\]
If we wish that the measure of this set does not exceed $K/2$, we are led to the inequality
\[
t> \bigg(\frac{2C'}{K} \bigg)^{(m-1)k} k^{n(m-1)k}.
\]
Thus, there exists a constant $\eta>0$, which is independent of the choice of $k\in\N$, such that $t=e^{\eta k \log k}$ satisfies the above bound. Consequently, the set $E$ of points $x\in I$ such that the inequality \eqref{product4} {\bf fails} has measure at least $K/2$. This completes the proof of Lemma \ref{product}.
\end{proof}

We are now ready to prove Theorem \ref{decay}.

\begin{proof}[Proof of Theorem \ref{decay}]
By Theorem \ref{tumb} it suffices to show that for any trigonometric polynomial $u \ne 0$, any finite subset $B=\{b_1,\ldots,b_n\} \subset \R_+$ and any $M>0$, the set $E_{u,M,B}$ given by \eqref{tumb1} has positive measure.
On the contrary, suppose that for some choice of $u$, $B$, and $M>0$ we have
\begin{equation}\label{fexp2}
|f(x)| \le M \sum_{i=1}^n \frac{|f(x+b_i)|}{|u(x)|}
\qquad\text{for a.e. } x\in \R_+.
\end{equation}
By recursion, \eqref{fexp2} implies that
\begin{equation}\label{fexp4}
|f(x)| \le M^k \sum_{i(1)=1}^n \ldots \sum_{i(k)=1}^n 
\bigg|f\bigg(x+\sum_{l=1}^k b_{i(l)}\bigg)\bigg| 
  \prod_{j=1}^k \bigg| u\bigg(x+\sum_{l=1}^{j-1} b_{i(l)}\bigg)\bigg|^{-1}.
\end{equation}

By the Lebesgue differentiability theorem applied to the set $\{x\in \R: |f(x)|>\ve\}$, there exists an interval $I \subset \R$ and a constant $\ve>0$ such that
\begin{equation}\label{fexp6}
|\{x\in I: |f(x)|>\ve \}| > |I|/2.
\end{equation}
Moreover, we can assume that $I=[a,a+K] \subset \R_+$ since we have linear independence of $\mathcal G(f,\R^2)$ for functions supported in a half-line. Also, we can assume that $b_1$ is the smallest element of $B$. Then, by Lemma \ref{product}, for any $k\in\N$, there exists a subset $E_k \subset I$ with $|E_k|>|I|/2$ such that
\begin{equation}\label{fexp8}
\begin{aligned}
|f(x)| & \le M^k \sup_{y\ge a+kb_1} |f(y)|
 \sum_{i(1)=1}^n \ldots \sum_{i(k)=1}^n 
\bigg| \prod_{j=1}^k u\bigg(x+\sum_{l=1}^{j-1} b_{i(l)}\bigg)\bigg|^{-1} 
\\
& \le M^k e^{\eta k \log k} \sup_{y\ge a+kb_1} |f(y)|
\qquad\text{for }x\in E_k. 
\end{aligned}
\end{equation}
Combining \eqref{fexp6} and \eqref{fexp8}, we have that
\[
\sup_{y\ge a+kb_1} |f(y)| \ge  \ve M^{-k} e^{-\eta k \log k}
\qquad\text{for all }k\in\N.
\]
This contradicts our decay hypothesis \eqref{fel} and completes the proof of Theorem \ref{decay}.
\end{proof}

The following result, which is an immediate corollary of Theorem \ref{decay}, was first shown by Heil \cite{Heil} in the case when $\epsilon = 1$. 

\begin{corollary} Let $p(x)=a_m x^m+\ldots +a_0$ be an algebraic polynomial and let $\epsilon > 0$.  Then, the function $p(x) e^{-|x|^{1+\epsilon}}$ has linearly independent time-frequency translates.
\end{corollary}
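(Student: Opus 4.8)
The plan is to apply Theorem~\ref{decay} directly after verifying that the function $g(x) = p(x) e^{-|x|^{1+\epsilon}}$ satisfies the one-sided decay hypothesis~\eqref{fel}. Since $g$ is continuous and not identically zero (a polynomial has only finitely many roots, and the exponential factor never vanishes), it lies in $\mathcal M \setminus \{0\}$, so it suffices to check that for every $c > 0$,
\[
\lim_{x \to \infty} |p(x)| e^{-x^{1+\epsilon}} e^{c x \log x} = 0.
\]

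First I would observe that this reduces to a comparison of exponents as $x \to \infty$. Since $|p(x)| \le C(1 + |x|)^m$ for some constant $C$, and $(1+x)^m = e^{m \log(1+x)}$ grows subexponentially, it is enough to show that
\[
c x \log x + m \log(1+x) - x^{1+\epsilon} \to -\infty.
\]
The dominant term here is $-x^{1+\epsilon}$: for any fixed $\epsilon > 0$ and any fixed $c > 0$, we have $x^{1+\epsilon} / (x \log x) = x^{\epsilon} / \log x \to \infty$, so $x^{1+\epsilon}$ eventually exceeds $2c\, x \log x$, and the logarithmic term $m \log(1+x)$ is negligible compared to either. Hence the exponent tends to $-\infty$ and the limit is zero. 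This is the entire content of the verification; no genuine obstacle arises, as the gap between $x^{1+\epsilon}$ and $x \log x$ is the whole point of the threshold in~\eqref{fel}.

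Having verified~\eqref{fel} for $g$, Theorem~\ref{decay} immediately yields that $\mathcal G(g, \R^2)$ is linearly independent, which is the assertion of the corollary. I would then add a remark connecting this to Heil's original result: when $\epsilon = 1$ the function is $p(x) e^{-x^2}$, which already falls under the Gaussian-decay case handled in Theorem~\ref{gooddecay} (indeed $p(x) e^{-x^2}$ satisfies~\eqref{fga} by the same exponent comparison, since $x^2$ dominates $cx^2$ only borderline — one should instead note $p(x)e^{-x^2} = p(x) e^{-(1-\delta)x^2} e^{-\delta x^2}$ and that $p(x) e^{-\delta x^2} \to 0$, so~\eqref{fga} holds with any $c < 1$; but in fact~\eqref{fel} also holds since $x^2$ beats $cx\log x$), so Theorem~\ref{decay} recovers and extends it. The only mildly delicate point worth stating carefully is that the hypothesis~\eqref{fel} requires the limit for \emph{all} $c > 0$, so one must track that the comparison works uniformly in the sense that for each fixed $c$ the threshold $x$ beyond which $x^{1+\epsilon} > 2cx\log x$ exists — which it does, since $\epsilon$ is fixed and positive. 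No step here is an obstacle; the corollary is a routine instantiation.
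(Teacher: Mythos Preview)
Your proposal is correct and follows exactly the paper's approach: the paper simply states that the result is an immediate corollary of Theorem~\ref{decay}, and you have supplied the routine verification of the decay hypothesis~\eqref{fel}. The extra remark about the $\epsilon=1$ case is harmless commentary; the core argument matches the paper's intent.
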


Observe that time-frequency translates of $f$ are linearly independent if and only if time-frequency translates of its Fourier transform $\hat f$ are linearly independent. Thus, the results in this section also give conditions on $\hat f$ (and in the case of Theorem \ref{grz}, on $\Lambda$) which guarantee the linear independence of time-frequency translates of $f$. We leave the details to the reader.


\section{Conclusions and final remarks}

We end this paper by posing questions which are closely related to the HRT conjecture. These questions seem to be just on the edge of what seems to be doable. Note that in this paper we have considered only one-sided decay conditions so far. That is, decay as $x\to \infty$ or by the symmetry as $x\to -\infty$.  As such, there is a natural boundary of $e^{-cx}$ at which our results fail to hold, since the translates of $e^{-cx}$ are linearly dependent.  So, functions with faster than exponential decay are a very natural class to consider. 

\begin{question} If $f \ne 0$ satisfies faster than exponential decay \eqref{fex}, then does $f$ have linearly independent time-frequency translates?
\end{question}

The results of Section \ref{S3} strongly suggest a positive answer to this question. In fact, Theorems \ref{decay}, \ref{grz}, or \ref{qm} show linear independence if we assume additional decay on $f$, a restriction on time-frequency translates $\Lambda$, or some weak monotonicity on $f$, respectively. However, the general case when $f$ merely satisfies \eqref{fex} remains open.

Another natural area worth investigating is to impose two-sided decay conditions. While linear independence fails for functions with one sided exponential decay such as $e^{-x}$, it is natural to wonder what happens for functions whose tails are exactly equal to $e^{-|x|}$.  The following theorem indicates that we have linear independence of time-frequency translates under such a scenario. As an immediate consequence of Theorem \ref{e2x} we have that $f(x)=e^{-|x|}$ has linearly independent time-frequency translates.

\begin{theorem}\label{e2x}
Let $f\in\mathcal M$ be such that there exist constants $a,c \in\bc$ and $b\in\br$ satisfying
\begin{equation}\label{bo}
f(x) = c e^{ax} \qquad\text{for a.e. }x>b, 
\end{equation}
or, alternatively, for a.e. $x<b$.
Then, $\mathcal G(f,\R^2)$ is linearly independent, unless $f(x)=c e^{ax}$ for a.e. $x\in\br$.
\end{theorem}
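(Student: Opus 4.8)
The plan is to argue by contradiction using the reformulation recorded in \eqref{tumb2}. We may assume $f\neq 0$, since otherwise the exceptional case $f=ce^{ax}$ (with $c=0$) holds trivially. By the reflection $f(x)\mapsto f(-x)$, which preserves both the linear independence of $\mathcal G(\cdot,\R^2)$ and the property of being a.e.\ a global exponential, it suffices to treat the case $f(x)=ce^{ax}$ for a.e.\ $x>b$. If $c=0$, then $f$ is supported in the half-line $(-\infty,b]$, and the conclusion is the known linear independence of time-frequency translates of functions supported in a half-line (the same fact already invoked in the proofs of Theorems \ref{grz} and \ref{decay}); so assume $c\neq 0$. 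Suppose, towards a contradiction, that there are reals $b_1<\dots<b_N$ and non-zero trigonometric polynomials $u_1,\dots,u_N$ with $\sum_{i=1}^N u_i(x)f(x-b_i)=0$ for a.e.\ $x\in\R$.

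The next step is to peel off the exponential tail. Set $g(x)=f(x)-ce^{ax}$; by hypothesis $g=0$ a.e.\ on $(b,\infty)$, so $g$ is supported (up to a null set) in $(-\infty,b]$. Using $f(x-b_i)=ce^{-ab_i}e^{ax}+g(x-b_i)$, the dependence relation rewrites as
\[
ce^{ax}\,v(x)=-\sum_{i=1}^N u_i(x)\,g(x-b_i)\qquad\text{a.e. on }\R,
\]
where $v(x):=\sum_{i=1}^N e^{-ab_i}u_i(x)$ is itself a trigonometric polynomial. For a.e.\ $x>b+b_N$ we have $x-b_i>b$ for every $i$, so $g(x-b_i)=0$ and the right-hand side vanishes; hence $ce^{ax}v(x)=0$ a.e.\ on $(b+b_N,\infty)$. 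Since $c\neq 0$ and $v$ extends to an entire function with only isolated zeros, this forces $v\equiv 0$. Substituting back yields $\sum_{i=1}^N u_i(x)g(x-b_i)=0$ for a.e.\ $x\in\R$.

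Finally, $g$ is supported in the half-line $(-\infty,b]$ and satisfies a linear dependence among its time-frequency translates with the distinct shifts $b_i$ and the non-zero trigonometric polynomials $u_i$; by the half-line linear independence result this forces $g\equiv 0$, i.e.\ $f(x)=ce^{ax}$ for a.e.\ $x\in\R$ --- contradicting the hypothesis that $f$ is not a.e.\ a global exponential. I expect no serious obstacle here: the only points requiring a word of care are the implication ``$ce^{ax}v\equiv 0$ on a half-line $\Rightarrow v\equiv 0$'' (immediate from the analyticity of trigonometric polynomials) and the availability of the half-line result. For completeness one can also check that the excluded case is genuinely excluded: $\mathcal G(ce^{ax},\R^2)$ is linearly dependent because, for a fixed $\alpha$ and distinct $b_1\neq b_2$, the two distinct elements $M_\alpha T_{b_1}(ce^{ax})$ and $M_\alpha T_{b_2}(ce^{ax})$ are non-zero scalar multiples of the single function $e^{(a+2\pi i\alpha)x}$.
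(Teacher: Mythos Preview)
Your argument is correct. Both your proof and the paper's begin the same way: restrict to a far-right interval where every translate lands in the exponential tail, deduce that the trigonometric polynomial $v(x)=\sum_i e^{-ab_i}u_i(x)$ vanishes identically, and use this to kill the exponential part of the dependence. The difference is in how the endgame is packaged. You perform a global decomposition $f=ce^{ax}+g$ with $g$ supported in a half-line, obtain a dependence $\sum_i u_i(x)g(x-b_i)=0$ on all of $\R$, and then invoke the half-line linear-independence result as a black box to force $g\equiv 0$. The paper instead introduces the minimal threshold $b_0$ for which \eqref{bo} holds, and works on the single strip $b_0+x_2<y<b_0+x_1$ where only one translate reaches below $b_0$; subtracting off the identity $v\equiv 0$ leaves $u_1(y)\bigl(f(y-x_1)-ce^{a(y-x_1)}\bigr)=0$, which contradicts minimality of $b_0$. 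In effect the paper unpacks just the first step of the half-line argument and uses minimality in place of the full lemma. Your reduction is cleaner and more modular, and it also handles the case $c=0$ explicitly (the paper's presentation, as written, divides by $c$ in the step ``the first equation implies $\sum u_i e^{-ax_i}=0$'' and so tacitly assumes $c\neq 0$, though the argument is easily repaired). Your closing remark that the exceptional case $f=ce^{ax}$ is genuinely dependent is a nice complement not included in the paper.
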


\begin{proof}
Let $b_0\in\br$ be the minimal $b$ such that \eqref{bo} holds. That is, for every $\delta>0$ there exists a set $E \subset (b_0-\delta,b_0)$ of positive measure such that
\begin{equation}\label{bn}
f(y) \ne c e^{ay} \qquad\text{for }y\in E.
\end{equation}
On the contrary, suppose that $\mathcal G(f)$ is linearly dependent. Then, there exist $x_1,\ldots, x_n \in \br$ and  non-zero trigonometric polynomials $u_1, \ldots, u_n$ such that
\[
u_1(x)f(x-x_1)+ \ldots + u_n(x) f(x-x_n)=0 
\qquad\text{for a.e. }x\in\br.
\]
By reordering we can assume that $x_1>x_2>\ldots>x_n$. Therefore, we have
\[
\begin{aligned}
u_1(x) c e^{a(x-x_1)} + u_2(x) c e^{a(x-x_2)} + \ldots + u_n(x) c e^{a(x-x_n)} &=0 \qquad\text{for a.e. } x>b_0+x_1,\\
u_1(y) f(y-x_1) + u_2(y) c e^{a(y-x_2)} + \ldots + u_n(y) c e^{a(y-x_n)} &=0 \qquad\text{for a.e. } b_0+x_2<y<b_0+x_1.
\end{aligned}
\]
The first equation implies that $u_1(x) e^{-ax_1} + \ldots + u_n(x) e^{-ax_n} =0$ for a.e. $x>b_0+x_1$ and thus for all $x\in\br$. Substituting this into the second equation yields
\[
u_1(y) f(y-x_1) -u_1(y) c e^{a(y-x_1)} =0 
\qquad\text{for a.e. } b_0+x_2<y<b_0+x_1.
\]
Since $u_1$ is a non-zero trigonometric polynomial we have $f(y) = ce^{ay}$ for a.e. $b_0+x_2-x_1<y<b_0$. This contradicts \eqref{bn}. Therefore, $\mathcal G(f,\R^2)$ is linearly independent. The case when \eqref{bo} holds for a.e. $x<b$ follows by symmetry.
\end{proof}

Our second question is motivated in part by an attempt to improve Theorem \ref{qm} by replacing $f$ with $f {\mathbf 1}_K$ for some measurable subset $K\subset \R$. While the decay condition is inherited in this process, the quasi-monotonicity condition \eqref{qm0} is not. Since compactly supported functions have linearly independent time-frequency translates, it is natural to believe that functions whose support is sufficiently sparse must also have the same property.  

\begin{question}\label{q2} Let $K$ be the support of a non-zero function $f\in L^2(\R)$.  If $K$ has lower Beurling density zero, that is, if 
\[
\liminf_{R\to \infty}\inf_{x\in \br} |K\cap [x, x + R]| = 0,
\]
then does $f$ have linearly independent time-frequency translates?  
\end{question}

As a particular example, note that if there are three distinct translations $\{0, b_1, b_2\}$, then a necessary condition for $f$ to have linear dependence \eqref{tumb2} is that
\begin{itemize}
\item $K \subset (K + b_1) \cup (K + b_2)$,
\item $K + b_1 \subset K \cup (K + b_2)$, and
\item $K + b_2 \subset (K + b_1) \cup K$.
\end{itemize}
It would be interesting to determine whether those three conditions already imply that $K$ does not have lower Beurling density zero.

\bibliographystyle{amsplain}

\end{document}